\documentclass[12pt]{amsart}
\usepackage{etex} \usepackage{amssymb,amsthm,amsmath,amsfonts,stmaryrd,soul}

\usepackage[bb=pazo]{mathalpha}

\usepackage[colorlinks,anchorcolor=black,citecolor=black,linkcolor=black,filecolor=black,urlcolor=black]{hyperref}

\usepackage[iso,hhmmss]{datetime}

\usepackage{xargs}                      \usepackage{xcolor}  \usepackage[colorinlistoftodos,textsize=tiny]{todonotes}
\usepackage{todonotes}
\newcommandx{\question}[2][1=]{\todo[linecolor=blue,backgroundcolor=blue!25,bordercolor=blue,#1]{#2}}
\newcommandx{\mihai}[2][1=]{\todo[linecolor=teal,backgroundcolor=teal!25,bordercolor=teal,#1]{#2}}
\newcommandx{\franz}[2][1=]{\todo[linecolor=teal,backgroundcolor=green!25,bordercolor=teal,#1]{#2}}
\newcommandx{\kamil}[2][1=]{\todo[linecolor=teal,backgroundcolor=purple!25,bordercolor=teal,#1]{#2}}
\newcommandx{\wiktor}[2][1=]{\todo[linecolor=teal,backgroundcolor=magenta!25,bordercolor=teal,#1]{#2}}
\newcommandx{\info}[2][1=]{\todo[caption={Short note},linecolor=violet,backgroundcolor=violet!25,bordercolor=violet,size=\tiny,#1]{#2}}
\newcommandx{\thiswillnotshow}[2][1=]{\todo[disable,#1]{#2}}

\newcommand{\seg}{\mathrm{seg}}

\usepackage{color}

\usepackage{pict2e}

\usepackage[margin=2cm,a4paper]{geometry}

\usepackage{tikz,pgfplots,pgfplotstable}

\usetikzlibrary{patterns}
\usetikzlibrary{backgrounds}

\usepackage{ifthen}
\newboolean{coloron}
\setboolean{coloron}{false}
\ifthenelse{\boolean{coloron}}
{
	\tikzset{    afill/.style={draw,color=red,fill=red},
		bfill/.style={draw,color=green,fill=green}
	}
	\tikzstyle{thline}=[thick]
	\tikzstyle{cline}=[color=red]
	\tikzstyle{aline}=[color=red]
	\tikzstyle{bline}=[color=green]
}
{
	\tikzset{      afill/.style={draw,pattern=crosshatch},
		bfill/.style={draw,pattern=crosshatch dots}
	}
	\tikzstyle{thline}=[thick]
	\tikzstyle{cline}=[densely dashdotted]
	\tikzstyle{aline}=[densely dashdotted]
	\tikzstyle{bline}=[densely dotted]
}

\usepackage{mathabx}
\newlength{\tmpl}

\newcommand{\Prodab}{\alpha}

\usepackage[extra]{tipa}
\usepackage{graphicx}
\usepackage{caption,subcaption}

\usepackage[shortlabels]{enumitem}

\numberwithin{equation}{section}

\newcommand{\uComp}[1]{\mathcal{C}^\vee_{#1}} 
\newcommand{\oneconn}[1]{\mathcal{C}^{1}_{#1}}
\newcommand{\oneconntype}[2]{\mathcal{C}^{1,#2}_{#1}}
\newcommand{\twoconn}[1]{\mathcal{C}^{2}_{#1}}

\newcommand{\CompOdd}[1]{\twoconn{#1}}

\DeclareMathOperator{\Int}{\mathit{Int}}

\newcommand{\harpleftsign}{\scriptstyle\leftharpoonup}
\newcommand{\harpleft}[2]{  \ifx\displaystyle#1\doalign{$\harpleftsign$}{#1#2}\fi
	\ifx\textstyle#1\doalign{$\harpleftsign$}{#1#2}\fi
	\ifx\scriptstyle#1\doalign{\scalebox{.6}[.9]{$\harpleftsign$}}{#1#2}\fi
	\ifx\scriptscriptstyle#1\doalign{\scalebox{.5}[.8]{$\harpleftsign$}}{#1#2}\fi
}

 \newcommand{\harprightsign}{\scriptstyle\rightharpoonup}
\newcommand{\harpright}[2]{  \ifx\displaystyle#1\doalign{$\harprightsign$}{#1#2}\fi
	\ifx\textstyle#1\doalign{$\harprightsign$}{#1#2}\fi
	\ifx\scriptstyle#1\doalign{\scalebox{.6}[.9]{$\harprightsign$}}{#1#2}\fi
	\ifx\scriptscriptstyle#1\doalign{\scalebox{.5}[.8]{$\harprightsign$}}{#1#2}\fi
}

\newcommand{\doalign}[2]{ {\vbox{\offinterlineskip\ialign{\hfil##\hfil\cr#1\cr$#2$\cr}}}}

\def\R{{\mathbb R}}

\def\IC{{\mathbb C}}

\def\IN{{\mathbb N}}

\def\<{\langle}
\def\>{\rangle}

\def\X{{\mathnormal X}}

\def\A{\mathcal{A}}

\def\SpPair{\mathrm{piv}}

 \DeclareMathOperator{\SG}{\mathfrak{S}} \DeclareMathOperator{\NC}{\mathit{NC}}

\newtheorem{th-def}{Theorem-Definition}[section]
\newtheorem{theo}{Theorem}[section]
\newtheorem{lemm}[theo]{Lemma}

\newtheorem{prop}[theo]{Proposition}
\newtheorem{cor}[theo]{Corollary}
\theoremstyle{definition}

\newtheorem{defi}[theo]{Definition}

\newtheorem{Rem}[theo]{Remark}

\def\cvput#1[#2]{\pnode(#1,1){#1} \pscircle*(#1,1){.1} \rput(#1,.5){$#2$}}

\title[Quadratic polynomials preserving free infinite divisibility]{Quadratic polynomials preserving free infinite divisibility}
	\author[W. Ejsmont]{Wiktor Ejsmont}
	\address[Wiktor Ejsmont]{ Department of Telecommunications and Teleinformatics, 
		Wrocław University of Science and Technology \\ Wybrze\.ze Wyspia{\'n}skiego 27, 50-370 Wroc\l aw, Poland }
	\email{wiktor.ejsmont@gmail.com}
\author[F. Lehner]{Franz Lehner}
\address[Franz Lehner]{Institut f\"ur Diskrete Mathematik,
	Technische Universit\"at Graz,
	Steyrergasse 30, 8010 Graz, Austria }
\email{lehner@math.tugraz.at}
\subjclass[2010]{Primary: 46L54. Secondary: 62E10.}

\author[M. Popa]{Mihai Popa}
\address[Mihai Popa]{The University of Texas at San Antonio, Department
	of Mathematics, One UTSA Circle, San Antonio, Texas 78249,
	and \newline ${}\hspace{.35cm}{}$ Institute of Mathematics
	``Simion Stoilow'' of the Romanian Academy, P.O. Box
	1-764, Bucharest, RO-70700, Romania}

\author[K. Szpojankowski]{Kamil Szpojankowski}
\address[Kamil Szpojankowski]{Institute of Mathematics of the Polish Academy of Sciences, ul. \'Sniadeckich 8, 00-656 Warszawa, Poland, and 
	Faculty of Mathematics and Information Science,
	Warsaw University of Technology, Koszykowa 75, 00-662 Warszawa, Poland.} 
\email{kszpojankowski@impan.pl, kamil.szpojankowski@pw.edu.pl  }
\thanks{KSz: This research was funded in part by National Science Centre, Poland WEAVE-UNISONO grant BOOMER 2022/04/Y/ST1/00008.}
\keywords{commutator, free infinite divisibility,
	cancellation of free cumulants}

\begin{document}

\begin{abstract} 
  We characterize  non-commutative polynomials of degree 2 which preserve free
  infinite divisibility as self-adjoint Lie polynomials of degree 2.
  This extends previous results which characterized homogeneous
  quadratic forms only.
  The proof is based on an involution   of colored noncrossing partitions
  cancelling many terms;
  positive definiteness of the remaining terms is established
  by an explicit operator model on Fock space.
  Examples show that higher order Lie polynomials do not preserve free infinite
  divisibility.

\end{abstract}

\date{
\today, \currenttime}

\maketitle

\section{Introduction}

We study quadratic polynomials in non-commutative variables,
that is polynomials of the form
\begin{align}\label{eqn:1.1}
  P(X_1,X_2,\ldots,X_n)=\sum_{i,j=1}^na_{i,j}X_i X_j+\sum_{i =1}^n b_iX_i
  .
\end{align} 
A self-adjoint non-commutative polynomial is said to \emph{preserve free infinite
  divisibility}
if it produces a freely infinitely
divisible random variable whenever
it is evaluated in mutually free and freely infinitely divisible random
variables.

In \cite{EjsmontLehner:2019:commutators}
homogeneous polynomials of degree 2 with this property were
characterized as sums of commutators or, equivalently,
quadratic forms with  self-adjoint skew-symmetric
system matrix $A=[a_{i,j}]_{i,j=1}^N$.
In the present paper we show that this characterization remains true
if an arbitrary linear part is added. 
After a presentation of the results from \cite{EjsmontLehner:2019:commutators}
D.-V. Voiculescu suggested the question whether arbitrary self-adjoint Lie
polynomials preserve infinite divisibility.
While the answer to this question is negative in general
(see the examples of degree 3 in Remark~\ref{computer},
first of which was given by D. Shlyakhtenko),
the results of the present paper show that it is true for Lie polynomials of
degree two.

For the proof we extend the sign-reversing involution on  colored noncrossing
partitions  from 
\cite{EjsmontLehner:2019:commutators} to the nonhomogeneous setting.
This gives a concise description of the free cumulants,
and a full Fock-space realization identifies the surviving
terms with moments of a selfadjoint operator. Our operator model is inspired
by, but not similar to the ones from \cite{popavinnikov} and
\cite{popapacific}, and more streamlined and general than the one from
\cite{PopaSzpojankowski:2025}. This combination of combinatorial cancellation
and operator construction provides a structural explanation for the
preservation phenomenon and may be useful in studying higher-degree
transformations of freely infinitely divisible laws. 

The paper is organized as follows.

In Section 2 we provide the necessary background
and give some new notations and definitions regarding noncrossing partitions
and free cumulants.

Section 3  presents the main combinatorial result, namely
the vanishing of the contribution of a class of noncrossing partitions in the
expansion of the free cumulants of $P(X_1,X_2,\ldots,X_N)$. The results in this
section are proved under the hypothesis that the matrix $A$ from (\ref{eqn:1.1}) is
skew symmetric, but notably we do not assume freeness of the variables $ X_1,
X_2, \dots, X_N$.

Section 4 is divided into 2 subsections. In the first subsection we contruct an
explicit operator $ V $  on a full Fock space without the vacuum vector such
that the $m$-th free cumulants of $ P$ equals the $m$-th moment $\langle V^m \xi, \xi \rangle$ 
with respect to a certain vector $ \xi$ and under the assumption that $
X_1,X_2, \dots, X_N $ are mutually free compound free Poisson variables.
The second subsection contains the main result of the paper, Theorem~\ref{prop:znikaniekumulant}, which characterizes the polynomials from (\ref{eqn:1.1}) which preserve free infinite divisibility. The last part of the paper, Remark~\ref{computer} gives some counterexamples showing that preservation of free infinite divisibility does not generally extend to the Lie algebra generated by $ X_1, X_2,\dots, X_N$.

\section{Preliminaries}

\subsection{Noncrossing Partitions, Free Cumulants and Free Infinite Divisibility}\label{prelim:1}

We shall identify finite ordered sets with $n $ elements with the ordered set $[ n ] = \{ 1, 2, \dots, n \}$.  A partition $\pi$ of $[n]$ is a set $\{ B_1, B_2, \dots, B_k\}$ of mutually disjoint subsets of $ [n]$
(also called \emph{blocks}) 
whose union is $[n]$. The \emph{size} of $\pi$ is  the number of blocks of
$\pi$, denoted by $\lvert\pi\rvert=k$.
A block consisting of only one elements is called a \emph{singleton}. 

 A set of consecutive elements $I =\{i, i+1, \dots, i+k\}$
of $ [n] $ is said to be a \emph{segment} of a partition $\pi $ of $[n]$ if
there exists a block $B$ of $\pi$ such that $I \subseteq B $, but $ i-1$ and
$i+k+1$  are not elements of $ B$. In other words segments are the maximal
intervals contained in the blocks of $\pi$.
First and last segments of $\pi$, in lexicographic order, will be called  the \emph{extremal segments} of $\pi$.

The interval partition consisting of these segments is called the
\emph{segmentation} of $\pi$.

	The action of the symmetric group $\SG_n$ on the set $[n]$ naturally
induces an action on set partitions, namely
\begin{equation}
	\label{eq:sigma.pi}
	\sigma\cdot\pi=\{\sigma(B) \mid B\in\pi\}
	.
\end{equation}

 A partition $\pi$ is called \emph{noncrossing} 
if different blocks do not interlace, i.e., there is no quadruple of
elements $i<j<k<l$ such that $i$ is the same block as $k$, and $j$ is in the same block as $l$, but $ i $ and $j $ are not in the same block. 
The set of noncrossing partitions of $[n]$ is denoted by $\NC(n)$. 
$\NC(n)$ is a lattice under \emph{reverse refinement order}, where the relation $\pi\leq \rho$
holds if
every block of $\pi$ is contained in a block of $\rho$.
The maximal element of the lattice, i.e. the partition
 consisting of only one block, is denoted by $\hat{1}_{n}$; the minimal element, i.e. the partition consisting of $ n $ singletons, is denoted by $\hat{0}_n$.
 For more details and proofs on this topic see \cite[Lecture~9]{NicaSpeicher:2006}.

 \label{ssec:freecumulants}
 Given  a  non-commutative  $ \ast$-probability space $(\A,\tau)$  (i.e. a unital $\ast$-algebra
 $ \A $ endowed with a state $\tau $),
 the free  cumulants $\big( \kappa_n \big)_n $ are $ \mathbb{C} $-valued multilinear maps  on $ \A $
 defined  inductively via the relation
 \begin{align*}
   \tau(\X_{1}\X_{2}\dots \X_{n}) = \sum_{\pi \in \NC(n)}\kappa_{\pi}(\X_{1},\X_{2},\dots ,\X_{n}),\label{eq:DefinicjaKumulant}
 \end{align*}
 where 
 $\displaystyle
 \kappa_{\pi}(\X_{1},\X_{2},\dots ,\X_{n})
 =\prod_{\substack{B \in \pi\\ B = \{i_1, i_2, \dots i_q \}}}\kappa_{q}(\X_{i_1}, X_{i_2}, \dots, X_{i_q}). 
 $
 
 Univariate free cumulants $ \kappa_n(\X,\X, \dots. \X) $ will be abbreviated by $\kappa_n (X) $.
 
 Since the  non-commutative variables of concern in this paper are self-adjoint, we will list relevant properties of free cumulants in this framework.
A family $\{\X_1, \X_2, \dots, \X_n \} $  of self-adjoint elements from $(\A,\tau)$  is  \emph{free} (in the sense of Voiculescu see \cite{Voiculescu92}) if 
 \[ \kappa_m(\X_{h(1)}, \X_{h(2)}, \dots, \X_{h(m)} ) = 0 \]
 for every non-constant map $ h:[m]\rightarrow [n] $.

 Throughout the paper we will make extensive use of 
 the free version,
 due to Krawczyk and Speicher~\cite{KrawczykSpeicher:2000} (see also   \cite[Theorem 11.12]{NicaSpeicher:2006}), of the classical formula of 
 James and Leonov/Shiryaev \cite{James:1958,LeonovShiryaev:1959} which
 expresses cumulants of products in terms of  cumulants of the individual factors.
 \begin{theo}
   \label{thm:krawczyk}
   Assume that $\X_1, \dots, \X_n $ are (not necessarily distinct) non-commutative random variables and that $ 1\leq i_1 < i_2 < \dots < i_r =n$. Then
   \begin{align} 
     \label{twr:produktargumentow}
     \kappa_r(\X_1\dotsm \X_{i_1},X_{{i_1}+1}\dotsm X_{i_2},\dots,\X_{i_{r-1}+1}\dotsm\X_n)=\sum_{\substack{\pi\in \NC(n) \\ \pi\vee\rho=\hat{1}_{n}} }\kappa_\pi({\X_1,\dots,\X_n}). 
   \end{align} 
   where $\rho$ is the partition with blocks $\{1, 2, \dots, i_1\}$, $\{i_1+1, \dots, i_2\}, \dots, 
   \{i_{r-1}+1, \dots, i_r\}$.
 \end{theo}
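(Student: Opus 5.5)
The plan is to derive the formula from the moment--cumulant relation and M\"obius inversion on the lattice $\NC(n)$. Write $Y_j = X_{i_{j-1}+1}\dotsm X_{i_j}$ for $j=1,\dots,r$, with $i_0=0$. Let $\rho\in\NC(n)$ be the interval partition of the statement. The sets $\{\sigma\in\NC(n):\sigma\ge\rho\}$ and $\NC(r)$ are in natural bijection, with $\sigma\mapsto\hat\sigma$; this bijection is a lattice isomorphism onto the interval $[\rho,\hat 1_n]$. For every $\sigma\ge\rho$ we have the multiplicative identity
\[
\tau_{\hat\sigma}(Y_1,\dots,Y_r)=\tau_\sigma(X_1,\dots,X_n),
\]
because each block of $\hat\sigma$ corresponds to a union of consecutive $\rho$-blocks. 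Möbius inversion on $\NC(r)$ then gives
\[
\kappa_r(Y_1,\dots,Y_r)=\sum_{\sigma\in[\rho,\hat 1_n]}\mu(\sigma,\hat 1_n)\,\tau_\sigma(X_1,\dots,X_n),
\]
using $\mu_{\NC(r)}(\hat\sigma,\hat 1_r)=\mu_{\NC(n)}(\sigma,\hat 1_n)$, which holds since the bijection is a lattice isomorphism.

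Next, expand each $\tau_\sigma$ blockwise via the moment--cumulant formula. This gives $\tau_\sigma=\sum_{\pi\le\sigma}\kappa_\pi$, which is valid because restricting noncrossing partitions to the blocks of $\sigma$ identifies $\{\pi\in\NC(n):\pi\le\sigma\}$ with the product of the $\NC(B)$, $B\in\sigma$. Interchanging the sums, we obtain
\[
\kappa_r(Y_1,\dots,Y_r)=\sum_{\pi\in\NC(n)}\kappa_\pi(X_1,\dots,X_n)\sum_{\substack{\sigma\in\NC(n)\\ \sigma\ge\pi\vee\rho}}\mu(\sigma,\hat 1_n).
\]
The inner sum is a sum over the interval $[\pi\vee\rho,\hat 1_n]$, where the join is taken in $\NC(n)$. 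By the defining property of the Möbius function, $\sum_{\tau\le\sigma\le\hat 1}\mu(\sigma,\hat 1)=\delta_{\tau,\hat 1}$. Hence the inner sum equals $1$ if $\pi\vee\rho=\hat 1_n$ and $0$ otherwise, which is exactly \eqref{twr:produktargumentow}.

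The main obstacle is the first step: justifying carefully that $[\rho,\hat 1_n]\cong\NC(r)$ as lattices and that moments factor accordingly. I would verify it as follows. Since $\rho$ is an interval partition, collapsing each $\rho$-block to a point preserves both the crossing and noncrossing conditions for partitions coarser than $\rho$. Moreover, the join computed in $\NC(n)$ restricted to this interval agrees with the join transported from $\NC(r)$. The remaining steps are routine lattice manipulations, as in \cite[Lecture~11]{NicaSpeicher:2006}.
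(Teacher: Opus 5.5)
Your argument is correct. It is the standard proof via the lattice isomorphism $[\rho,\hat 1_n]\cong \NC(r)$, M\"obius inversion on $\NC(r)$, and the identity $\sum_{\pi\vee\rho\le\sigma\le\hat 1_n}\mu(\sigma,\hat 1_n)=\delta_{\pi\vee\rho,\hat 1_n}$; the paper gives no proof of its own and simply cites \cite{KrawczykSpeicher:2000} and \cite[Theorem~11.12]{NicaSpeicher:2006}, whose proof follows exactly these lines.
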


 We will call the partition $\rho$ above, the \emph{product degree} partition.
	
  A self-adjoint element $ \X $ from the $\ast$-non-commutative
  probability space is said to have \emph{free infinitely divisible
    distribution} or to be \emph{free infinitely divisible} if for any
  positive integer $ n $ there exist some self-adjoint $ X_n $ such that
  the distribution of $\X $ is the same as the distribution of $n $ free
  copies of $ \X_n $, i.e. $ \kappa_m (\X) = n \kappa_m (X_n)$ for each
  positive integer $ m $.
  The free infinite divisibility of $ \X $ is
  equivalent to  the sequence $\big( \kappa_n(X)_n\big) $
  being conditionally positive definite
 (see, for
 example \cite{AnGao2019}, \cite{ArizmendiHasebeSakuma13},  or
 \cite[Chapter 13]{NicaSpeicher:2006})
  or to $\X $ being a limit (in distribution) of compound free Poisson random
  variables,
  i.e. of non-commutative random variables $ Y $ such that  there exist a compactly supported positive measure $ \sigma $ on $ \mathbb{R} $ satisfying 
	$ \displaystyle \kappa_m(Y) = \int_{\mathbb{R} } t^m d \sigma(t) $  for all $ m \geq 1 $.

\subsection{Interval and Fibonacci partitions}\label{notations:2}

An \emph{interval} block of a partition $\pi$
is a block of the form $V=\{k, k+1,\dots, k+l\}$ for $k\geq 1$ and $0 \leq l \leq n -k$.
A partition $\pi$ is called \emph{interval partition} if every block is an interval.
The set of interval partitions of $[n]$ is denoted by 
$\Int(n)$. 

The endpoints of  $ i_1 < i_2 < \dots < i_r = n$ of 
the intervals of an interval partition 
$$
\rho=\{\{1,2,\dots,i_1\},\{i_1+1,i_1+2,\dots,i_2\},\dots,\{i_{r-1}+1,i_{r-1}+2,\dots,i_r\}\}
\in\Int(n)
$$ 
are called the \emph{descents} of $\rho$.

For every $n\in\IN$ we denote by        $\Int_{1,2}(n)$
the set of interval partitions with blocks of size either 1 or 2.
It is an easy exercise that these  sets are enumerated by the
Fibonacci numbers and therefore we call its elements
\emph{Fibonacci partitions}\footnote{
  There is an immediate bijection between $\Int_{1,2}(n)$
  and the metres  in Pingala's description of Sanskrit prosody,
  which marks the first occurrence
  of Fibonacci numbers in history \cite{WilsonWatkins:2013}.
}. Interval blocks with 2 elements  are called
\emph{braces}.

The \emph{interval closure} $\hat{\pi}$ of a partition $\pi\in \NC(n) $ 
is the smallest interval
partition which dominates $\pi$.
A partition $\pi$ is said to be \emph{irreducible} 
if its interval closure
is  $\hat{1}_n$ (i.e. $1$ and $n$ are in the same block of $\pi$).
The set of irreducible noncrossing partitions of $[n]$ will be denoted  by $NC^{\mathrm{irr}}(n)$. 
 
The convex hull of a block $B\in\pi$ is the interval
$I(B)=[\min(B),\min(B)+1,\dots,\max(B)]$ and is called the \emph{padding interval} of $B$. 

A block  $B'\in\pi$  is said to be \emph{nested inside} another block $B$ if $I(B')\subset I(B)$, i.e., if $\min(B)<\min(B')\leq \max(B')< \max(B)$.

\begin{defi}\label{c2}
  For  $\rho\in \Int_{1,2}(n),$ we define the
  \emph{cyclic Boolean complement}
  $\bar \rho\in \NC^{\mathrm{irr}}(n) $ as follows:
\begin{enumerate}[1.,leftmargin=2em]
\item{} If $\rho$ has at most one brace, then $\bar\rho = \hat{1}_n$.
\item{} If \(\rho\) has \(q\geq2\) braces, write them as  
\[
\{p_1,p_1+1\},\ldots,\{p_q,p_q+1\},
\qquad p_1<\cdots<p_q.
\]Define \(\bar\rho\) to have the blocks
\[
\{p_j+1,\ldots,p_{j+1}\},\qquad 1\leq j<q,
\]together with the outer block
\[
\{p_q+1,\ldots,n\}\cup\{1,\ldots,p_1\}.
\]
\end{enumerate}
\end{defi}

\begin{Rem}
  The lattice of interval partitions $\Int(n)$ is antiisomorphic
  to the Boolean lattice of order $n-1$.
  The isomorphism maps an interval partition to its set of \emph{descents},
  that is, the end points of the blocks except the last one.
  The \emph{Boolean complement} of an interval partition is the interval
  partition with  complementary descent set
  and the cyclic Boolean complement defined above
  is the smallest irreducible noncrossing partition dominating
  the Boolean complement.
  Note that in particular $\bar \rho\in \NC^{\mathrm{irr}}(n) $ and that $\bar \rho \vee \rho= \hat{1}_n$.
\end{Rem}

\begin{figure}[ht]
	\centering

\begin{minipage}{0.8\textwidth}
		\centering
		\begin{tikzpicture}[scale=0.08,baseline,remember picture]

			\node at (-8,12) {$\rho$:};

			\draw (2,11)--(2,15.5);
			\draw (8,11)--(8,15.5);
			\draw (14,11)--(14,15.5);
			\draw (20,11)--(20,15.5);
			\draw (26,11)--(26,15.5);
			\draw (32,11)--(32,15.5);
			\draw (38,11)--(38,15.5);
			\draw (44,11)--(44,15.5);
			\draw (50,11)--(50,15.5);
			\draw (56,11)--(56,15.5);
			\draw (62,11)--(62,15.5);
			\draw (68,11)--(68,15.5);
			\draw (74,11)--(74,15.5);
			\draw (80,11)--(80,15.5);
			\draw (86,11)--(86,15.5);
			\draw (92,11)--(92,15.5);

			\draw (14,15.5)--(20,15.5);
			\draw (26,15.5)--(32,15.5);
			\draw (38,15.5)--(44,15.5);
			\draw (62,15.5)--(68,15.5);
			\draw (74,15.5)--(80,15.5);

\node at (-8,-3) {$\bar\rho$:};

			\draw (2,-5)--(2,1.5);
			\draw (8,-5)--(8,1.5);
			\draw (14,-5)--(14,1.5);

			\draw (20,-5)--(20,-.5);
			\draw (26,-5)--(26,-.5);
			\draw (32,-5)--(32,-.5);
			\draw (38,-5)--(38,-.5);
			\draw (44,-5)--(44,-.5);
			\draw (50,-5)--(50,-.5);
			\draw (56,-5)--(56,-.5);
			\draw (62,-5)--(62,-.5);
			\draw (68,-5)--(68,-.5);
			\draw (74,-5)--(74,-.5);

			\draw (80,-5)--(80,1.5);
			\draw (86,-5)--(86,1.5);
			\draw (92,-5)--(92,1.5);

			\draw (20,-.5)--(26,-.5);
			\draw (32,-.5)--(38,-.5);
			\draw (44,-.5)--(62,-.5);
			\draw (68,-.5)--(74,-.5);
			\draw (2,1.5)--(92,1.5);

		\end{tikzpicture}
	\end{minipage}

	\caption{}
	\label{fig:complement}
\end{figure}

We will denote by
$$
\uComp{\rho} = \{\pi\in\NC(n) \mid \pi\vee\rho
=\hat{1}_{ n} 
\}
$$
the set of \emph{upper semicomplements} of $\rho$.

We now introduce the main definition of this section,
which distinguishes blocks of $\pi\in \uComp{\rho}$
according to the way their endpoints are connected
by the blocks $\rho$.
\begin{defi}\label{def:multe}
  Fix a Fibonacci partition $\rho\in \Int_{1,2}(n)$, a noncrossing partition  $\pi\in \uComp{\rho} $ 
 and two distinct blocks  $B,B'\in\pi$.
 \begin{enumerate}[1.,leftmargin=2em]
\item We say that a brace $C\in \rho$ \emph{connects} the blocks $B$ and $B'$
    if both $B\cap C\ne\emptyset$ and $B'\cap C\ne\emptyset$.

   \item
A  \emph{$\rho$-1-connected block} or simply  \emph{1-connected block}  of $\pi$ is a block $B\in\pi$ such that only one  of the pairs $\{ \min(B)-1,\min(B)\}$ and $\{ \max(B),\max(B)+1\}$ is a brace in $\rho$, but not both.
\item A \emph{ 1-connected block} $ B $ of $ \pi $ is said to be \emph{inner}\footnote{Note that with this definition an inner 1-connected block could be an outer block in the usual sense.} if no 1-connected block of $ \pi$ is nested inside $ B $.
\item
    A
\emph{$2$-connected block} of $\pi$ is a block $B\in\pi$
    such that both $\{ \min(B)-1,\min(B)\}$ and $\{ \max(B),\max(B)+1\}$ are braces in $\rho$.  
   \item
    The remaining blocks of $\pi$ are called
    \emph{$0$-connected}, i.e., $B$ is 0-connected if
the blocks of $\rho$ that contain the extreme points of
      $B$ are both contained in the padding interval $I(B)$.
\end{enumerate}
\end{defi}

In short one can write this as follows. Let
\[
\ell_\rho(B)=
\begin{cases}
1,&\min(B)>1\text{ and }\{\min(B)-1,\min(B)\}\in\rho,\\
0,&\text{otherwise},
\end{cases}
\]and
\[
r_\rho(B)=
\begin{cases}
1,&\max(B)<n\text{ and }\{\max(B),\max(B)+1\}\in\rho,\\
0,&\text{otherwise}.
\end{cases}
\]Then define \(B\) to be \(j\)-connected when
\[
\ell_\rho(B)+r_\rho(B)=j,\qquad j\in\{0,1,2\}.
\]

\begin{Rem}
  It is easy to see that every $\pi\in\uComp{\rho}$ has the following properties.
  \begin{enumerate}[label=\arabic*.,leftmargin=2em]
   \item For every $\pi\in\uComp{\rho}$, each block of $\pi$ intersects at least one brace of $\rho$, except when $\rho=\hat{0}_n$; in that case, $\uComp{\rho}=\{\hat{1}_n\}$.
   \item Singletons of every $\pi\in\uComp{\rho}$ other than $\{1\}$ and $\{n\}$ are always 1-connected.
\item   $2$-connected blocks
    are bridges, each supported by two ``pillars''
  formed by braces connected to their neighbouring blocks,
  while $1$-connected blocks are ''half-bridges'' only.

  \begin{figure}[ht]
    \centering
      \begin{subfigure}[b!]{0.4\textwidth}
      \centering
      \begin{tikzpicture}[scale=0.07,baseline,remember picture]
        \node[color=white] at (31,25) {.};
        \node[color=white] at (31,-5) {.};
\draw (2,0)--(2,19);
        \draw (8,0)--(8,7.5);
        \draw (14,0)--(14,7.5);
        \draw  (20,0)--(20,4.5);
        \draw  (26,0)--(26,4.5);
        \draw (32,0)--(32,7.5);
        \draw (38,0)--(38,19);
        \draw  (44,0)--(44,4.5);
        \draw  (50,0)--(50,4.5);
        \draw  (56,0)--(56,4.5);
        \draw  (62,0)--(62,3);
\draw  (68,0)--(68,4.5);
        \draw  (74,0)--(74,4.5);
        \draw (80,0)--(80,19);
        \draw  (20,4.5)--(26,4.5);
        \draw (8,7.5)--(32,7.5);
        \draw  (44,4.5)--(50,4.5);
        \draw  (46,4.5)--(74,4.5);
        \draw (2,19)--(80,19);
\node at (19,13) {1-connected};
        \node at (56,10) {2-connected};
        \node at (19,24) {0-connected};
\node at (-6, 2){$\pi$:};

        \node at (-6,-8){$\rho$:};
        \draw[thick] (2,-9)--(2,-4.5);
        \draw[thick] (8,-9)--(8,-4.5);
        \draw (14,-9)--(14,-4.5);
        \draw (20,-9)--(20,-4.5);
        \draw (26,-9)--(26,-4.5);
        \draw (32,-9)--(32,-4.5);
        \draw (38,-9)--(38,-4.5);
        \draw (44,-9)--(44,-4.5);
        \draw(50,-9)--(50,-4.5);
        \draw(56,-9)--(56,-4.5);
\draw (62,-9)--(62,-4.5);
        \draw (68,-9)--(68,-4.5);
        \draw (74,-9)--(74,-4.5);
        \draw (80,-9)--(80,-4.5);
        \draw  (2,-4.5)--(8,-4.5);
        \draw (14,-4.5)--(20,-4.5);
        \draw (26,-4.5)--(32,-4.5);
        \draw (38,-4.5)--(44,-4.5);
        \draw (62,-4.5)--(68,-4.5);
        \draw (74,-4.5)--(80,-4.5);
      \end{tikzpicture}

\end{subfigure}
    
    \caption{0-, 1- and 2-connected blocks}
    \label{fig:innerodd}
  \end{figure}
 \item In the absence of singletons the 1-connected blocks
  are exactly the odd blocks considered in \cite{EjsmontLehner:2019:commutators}.

  \end{enumerate}
\end{Rem}

The following definition splits $\uComp\rho$ into two disjoint sets, which will be crucial for our further analysis.

\begin{defi} 
  \begin{enumerate}[1.]
   \item []
   \item 
    Fix a Fibonacci partition $\rho$.
    A noncrossing partition $\pi\in\uComp\rho$ is called
    \emph{$\rho$-1-connected}  or simply  \emph{1-connected}
    if it has  at least one 1-connected block.
    The pair $(\rho,\pi)$ is called a
    \emph{1-connected configuration}.
    The set of all  1-connected partitions $\pi\in\uComp\rho$ will be denoted by 
    $\oneconn{\rho}$ and we denote the set of
    all 1-connected configurations by
    $$
    \oneconn{n}= \{ ( \rho, \pi):\   \rho\in \Int_{1,2}(n),
    \pi\in\oneconn{\rho}\}.
    $$
 \item 
       A noncrossing partition $\pi\in\uComp\rho$ is called
    \emph{purely 2-connected} if all its blocks are 2-connected or 0-connected; the set of all purely 2-connected partitions from $\pi\in\uComp\rho$ will be denoted by 
    $\twoconn{\rho}$ and and we put
    $$
    \twoconn{n}= \{ ( \rho, \pi):\   \rho\in \Int_{1,2}(n), \pi\in\twoconn{\rho}\}.
    $$
  \end{enumerate}
\end{defi}

\begin{defi}
  Let $\rho\in \Int_{1,2}(n)$ and 
  $\pi\in \oneconn{\rho}$.

  \begin{enumerate}[1.,leftmargin=2em]
   \item  If  $B$ is a 1-connected block of $ \pi $ and $I(B)$ is its padding interval, then, by definition
    there exist a unique  brace $C\in \rho$
    such that  $I(B)\cap C$ contains exactly one element.
    The brace $C$
    will be called the \emph{pivot brace} of $B$
    and the unique point in its intersection with $B$
    will be called the \emph{pivot element} of $B$.
     The block $B$ will be called \emph{left-connected} if the pivot brace is equal to $\{\min(B)-1,\min(B)\}$
    and \emph{right-connected} if the pivot brace is equal to  $ \{\max(B),\max(B)+1\}$ 
   \item The pivot brace of the leftmost inner
    1-connected block
    of $\pi$ is called the  \emph{pivot brace of $\pi$} and is denoted by
    $ \SpPair_\pi$.
    In our figures the pivot brace will be highlighted by thick lines (see Figure~\ref{fig:nest}).

    \begin{figure}[ht]
      \begin{subfigure}[b!]{0.4\textwidth}
        \centering
        \begin{tikzpicture}[scale=0.07,baseline,remember picture]
          \node[color=white] at (31,27.675) {.};
          \node[color=white] at (31,-5) {.};
          
          \draw (2,0)--(2,7.5);
          \draw  (8,0)--(8,4.5);
          \draw  (14,0)--(14,4.5);
          \draw  (8,4.5)--(14,4.5);
          \draw (20,0)--(20,7.5);
          \draw (26,0)--(26,7.5);
          \draw  (32,0)--(32,4.5);
          \draw  (38,0)--(38,4.5);
          \draw  (32,4.5)--(38,4.5);
          \draw (44,0)--(44,7.5);
          \draw (50,0)--(50,7.5);
          \draw (56,0)--(56,7.5);
          \draw (62,0)--(62,7.5);
          \draw (68,0)--(68,4.5);
          \draw  (74,0)--(74,4.5);
          \draw  (68,4.5)--(74,4.5);
          \draw (80,0)--(80,7.5);
          \draw (2,7.5)--(50,7.5);
          \draw (56,7.5)--(80,7.5);
          
          \node at (31,13) {Right-connected};
          \node at (54.5,-13) {$\SpPair_\pi$};

          \draw (2,-9)--(2,-4.5);
          \draw (8,-9)--(8,-4.5);
          \draw (14,-9)--(14,-4.5);
          \draw (20,-9)--(20,-4.5);
          \draw (26,-9)--(26,-4.5);
          \draw (32,-9)--(32,-4.5);
          \draw (38,-9)--(38,-4.5);
          \draw (44,-9)--(44,-4.5);
          \draw[thick] (50,-9)--(50,-4.5);
          \draw[thick] (56,-9)--(56,-4.5);
          \draw[thick] (50,-4.5)--(56,-4.5);
          \draw (62,-9)--(62,-4.5);
          \draw (68,-9)--(68,-4.5);
          \draw (74,-9)--(74,-4.5);
          \draw (80,-9)--(80,-4.5);
          \draw (2,-4.5)--(8,-4.5);
          \draw (14,-4.5)--(20,-4.5);
          \draw (26,-4.5)--(32,-4.5);
          \draw (38,-4.5)--(44,-4.5);
\draw (74,-4.5)--(80,-4.5);
        \end{tikzpicture}

\end{subfigure}
      ~
      \begin{subfigure}[b!]{0.4\textwidth}
        \centering
        \begin{tikzpicture}[scale=0.07,baseline,remember picture]
          \node[color=white] at (31,25) {.};
          \node[color=white] at (31,-5) {.};

          \draw (2,0)--(2,10);
          \draw (8,0)--(8,7.5);
          \draw (14,0)--(14,7.5);
          \draw  (20,0)--(20,4.5);
          \draw  (26,0)--(26,4.5);
          \draw (32,0)--(32,7.5);
          \draw (38,0)--(38,10);
          \draw  (44,0)--(44,4.5);
          \draw  (50,0)--(50,4.5);
          \draw (56,0)--(56,10);
          \draw (62,0)--(62,10);
          \draw  (68,0)--(68,4.5);
          \draw  (74,0)--(74,4.5);
          \draw (80,0)--(80,10);
          \draw  (20,4.5)--(26,4.5);
          \draw (8,7.5)--(32,7.5);
          \draw  (44,4.5)--(50,4.5);
          \draw  (68,4.5)--(74,4.5);
          \draw (2,10)--(80,10);
          
          \node at (29,14) {Left-connected};
          \node at (6.5,-13) {$\SpPair_\pi$};

          \draw[thick] (2,-9)--(2,-4.5);
          \draw[thick] (8,-9)--(8,-4.5);
          \draw (14,-9)--(14,-4.5);
          \draw (20,-9)--(20,-4.5);
          \draw (26,-9)--(26,-4.5);
          \draw (32,-9)--(32,-4.5);
          \draw (38,-9)--(38,-4.5);
          \draw (44,-9)--(44,-4.5);
          \draw(50,-9)--(50,-4.5);
          \draw(56,-9)--(56,-4.5);
\draw (62,-9)--(62,-4.5);
          \draw (68,-9)--(68,-4.5);
          \draw (74,-9)--(74,-4.5);
          \draw (80,-9)--(80,-4.5);
          \draw[thick] (2,-4.5)--(8,-4.5);
          \draw (14,-4.5)--(20,-4.5);
          \draw (26,-4.5)--(32,-4.5);
          \draw (38,-4.5)--(44,-4.5);
\draw (74,-4.5)--(80,-4.5);
        \end{tikzpicture}

\end{subfigure}
      
      \caption{Examples of right- and left-connected blocks} \label{fig:nest}
    \end{figure}
  \end{enumerate}
\end{defi}

\subsection{Lemmata}
In this subsection characterize the different types of partitions
introduced above.
\begin{lemm}
  \label{lem:0-connected}
  Fix a Fibonacci partition  $\rho$.
  A partition $\pi\in\uComp{\rho}$ is irreducible if and only if it has a
  0-connected block. In this case
  the 0-connected block is the unique outer block.
\end{lemm}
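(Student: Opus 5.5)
The plan is to prove the following closure property, from which both directions and the uniqueness claim follow: \emph{if $B\in\pi$ is $0$-connected, then its padding interval $I(B)$ equals $[n]$.}

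\emph{Proving the closure property.} Let $B$ be $0$-connected and write $I(B)=\{a,\dots,b\}$ with $a=\min(B)$, $b=\max(B)$.

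First, $I(B)$ is a union of blocks of $\pi$. Take any $x\in I(B)\setminus B$ and let $B'$ be its block. Since $x$ lies strictly between two consecutive elements of $B$ and $\pi$ is noncrossing, $B'$ cannot contain any element outside $I(B)$. Hence $B'$ is nested inside $B$, so $B'\subseteq I(B)$.

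Second, $I(B)$ is a union of blocks of $\rho$. The blocks of $\rho$ have size at most two, so a block of $\rho$ meets both $I(B)$ and its complement only if it is the brace $\{a-1,a\}$ (with $a>1$) or the brace $\{b,b+1\}$ (with $b<n$). The condition $\ell_\rho(B)=r_\rho(B)=0$ says exactly that neither of these pairs is a brace of $\rho$.

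Now let $\sigma$ be the two-block partition $\{I(B),[n]\setminus I(B)\}$, or $\hat 1_n$ if $I(B)=[n]$. Both $\pi\le\sigma$ and $\rho\le\sigma$ hold, so $\pi\vee\rho\le\sigma$. Since $\pi\in\uComp{\rho}$, we have $\pi\vee\rho=\hat1_n$. This forces $\sigma=\hat1_n$, that is, $I(B)=[n]$.

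The join here is the join in the lattice of all set partitions, since that is the meaning of $\pi\vee\rho$ in Theorem~\ref{thm:krawczyk}. In any case $\sigma$ is an interval partition, hence noncrossing, so the argument works in either lattice.

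\emph{Deducing the lemma.} Suppose $\pi$ has a $0$-connected block $B$. By the closure property $\min(B)=1$ and $\max(B)=n$, so $1$ and $n$ lie in the same block and $\pi$ is irreducible.

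Conversely, suppose $\pi$ is irreducible, and let $B$ be the block containing both $1$ and $n$. Then $\ell_\rho(B)=0$ because $\min(B)=1$, and $r_\rho(B)=0$ because $\max(B)=n$. So $B$ is $0$-connected.

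For uniqueness, every $0$-connected block contains $1$ by the closure property, so there is only one. That block spans all of $[n]$, so every other block is nested inside it, and it is the unique outer block.

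The main obstacle is the first step of the closure property: checking carefully that $I(B)$ is a union of blocks of $\pi$. This is the only point where noncrossingness is used, and it also covers the boundary situation where $a=1$ but $b<n$ (or symmetrically), in which only one side of $I(B)$ needs the brace condition. Everything else is immediate from the definitions.
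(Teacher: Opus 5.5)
Your proof is correct and is essentially the paper's argument: both show that $\pi$ and $\rho$ lie below the two-block partition $\{I(B),[n]\setminus I(B)\}$, so $\pi\vee\rho=\hat 1_n$ forces $I(B)=[n]$, and the other direction is immediate. One small slip: when $1<\min(B)$ and $\max(B)<n$ this partition is not an interval partition, because $[n]\setminus I(B)$ is then not an interval; it is still noncrossing, since one of its two blocks is an interval and so they cannot cross, and your argument goes through unchanged.
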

\begin{proof}
If $ \pi $ is irreducible, then its unique outer block is obviously 0-connected.

  Conversely,
  assume that  $B $ is a 0-connected block of the partition $ \pi $ from $\uComp{\rho}$.
  Denote by $\pi_0$ the noncrossing partition with at most two blocks, consisting of
  the padding interval $I(B)$ and its complement $ D =[n] \setminus I(B) $.
  
  First, note that $ \pi \leq \pi_0 $, since all blocks of $\pi$ inside $B$ are
  contained in $ I(B)$, while all other blocks of $\pi$ are contained in $ D $.
  
  Next, note that $\rho\leq\pi_0$, since, by the definition of the 0-connected blocks, the blocks of $\rho$ containing the endpoints of $B$  are
    contained in the padding interval $I(B)$, and so are all blocks
    in between,
    while
    the other blocks of $\rho$ are contained in $D$.
  
 It follows that
    $\pi\vee\rho\leq \pi_0$, which contradicts $\pi\vee\rho =\hat{1}_{ n}$ unless $ D = \emptyset$, i.e. $I(B) = [n]$, so $ \pi $ is irreducible.
\end{proof}

\begin{lemm} \label{lemtwoinnerodd}
  Fix a Fibonacci partition $\rho\in \Int_{1,2}(n)$.
  If a partition $\pi\in\oneconn{\rho} $
  has exactly one 1-connected block then it  has a unique  0-connected block.  \end{lemm}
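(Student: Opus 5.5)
The plan is to reduce the statement to Lemma~\ref{lem:0-connected}: it suffices to show that $\pi$ is irreducible, since that lemma then provides a 0-connected block and shows it is the unique outer block. Any 0-connected block would itself be that outer block, so uniqueness follows as well. I will therefore argue by contradiction, assuming $\pi\in\oneconn{\rho}$ has exactly one 1-connected block but $\pi$ is not irreducible, and I will produce two distinct 1-connected blocks.

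Assume $\pi$ is reducible. Its interval closure $\hat\pi$ then has $k\geq 2$ blocks, namely the padding intervals $I(B_1),\dots,I(B_k)$ of the outer blocks $B_1,\dots,B_k$ of $\pi$, listed from left to right. These intervals are adjacent, so $\max(B_i)+1=\min(B_{i+1})$. Fix a cut point $c=\max(B_i)$ with $i<k$, and consider the two-block interval partition $\sigma=\{[1,c],[c+1,n]\}$. We have $\pi\leq\sigma$, because every block of $\pi$ lies inside some $I(B_j)$. If no block of $\rho$ met both $[1,c]$ and $[c+1,n]$, we would also have $\rho\leq\sigma$, hence $\pi\vee\rho\leq\sigma<\hat 1_n$, contradicting $\pi\in\uComp\rho$. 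The blocks of $\rho$ are intervals of size at most $2$, so the block crossing the cut must be exactly the brace $\{c,c+1\}=\{\max(B_i),\min(B_{i+1})\}$.

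Now apply this to the two extreme outer blocks.
\begin{itemize}
\item For $B_1$: we have $\min(B_1)=1$, so $\ell_\rho(B_1)=0$. The cut after $B_1$ gives the brace $\{\max(B_1),\max(B_1)+1\}\in\rho$, so $r_\rho(B_1)=1$. Hence $B_1$ is 1-connected.
\item For $B_k$: we have $\max(B_k)=n$, so $r_\rho(B_k)=0$. The cut before $B_k$ gives $\ell_\rho(B_k)=1$. Hence $B_k$ is also 1-connected.
\end{itemize}
Since $k\geq 2$, the blocks $B_1$ and $B_k$ are distinct, so $\pi$ has at least two 1-connected blocks. This contradicts the hypothesis, so $\pi$ is irreducible and Lemma~\ref{lem:0-connected} finishes the proof.

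The only delicate step is the cut argument: showing that the brace must sit exactly at $\{\max(B_i),\max(B_i)+1\}$. This uses that $\rho$ is a Fibonacci partition, since a longer interval block of $\rho$ could cross the cut without being a brace adjacent to the endpoints. The remaining steps are direct bookkeeping with $\ell_\rho$ and $r_\rho$.
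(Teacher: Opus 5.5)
Your proof is correct and is essentially the paper's argument: in both, reducibility would make the blocks containing $1$ and $n$ two distinct 1-connected blocks, and Lemma~\ref{lem:0-connected} then turns irreducibility into existence and uniqueness of the 0-connected block. The only difference is that you get the 1-connectedness of these extreme blocks from an inline cut argument (a brace must bridge each gap between consecutive outer blocks), while the paper gets it from the other direction of Lemma~\ref{lem:0-connected}: a reducible $\pi$ has no 0-connected block, and the blocks containing $1$ and $n$ can only be 0- or 1-connected.
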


\begin{proof}
  Observe that the blocks containing the extreme points $1$ and $n$ are either
  $0$-connected or $1$-connected.
  From the previous lemma it follows that if one of them is $1$-connected, the other
  one is $1$-connected as well and in this case there are at least two
  $1$-connected blocks, contradicting the assumption that $\pi$ has exactly one
  1-connected block.
  Thus $\pi$ is irreducible and its outer block is 0-connected.
\end{proof}

\begin{lemm}\label{lemmatwoone}
Fix a Fibonacci partition $\rho\in \Int_{1,2}(n)$ and 
  let   $\pi\in\oneconn{\rho}$.
  If $\pi$ has  two   1-connected blocks $ B $ and $B^\prime$ such that $ \min(B^\prime) = \max(B)+1$ and $ \{ \max(B), \min(B^\prime)\}$ is a brace in $\rho$ connecting $ B$ and $ B^\prime$, then the union of the padding intervals of $ B $ and $ B^\prime$ is $ [ n]$.  
\end{lemm}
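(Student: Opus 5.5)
The plan is to argue by contradiction: if $I(B)\cup I(B')\neq[n]$, build a partition of $[n]$ into two intervals that dominates both $\pi$ and $\rho$, contradicting $\pi\vee\rho=\hat 1_n$. Write $a=\min(B)$, $b=\max(B)$, so $\min(B')=b+1$, and put $c=\max(B')$. Then $I(B)\cup I(B')=[a,c]$ is an interval, and it suffices to show $a=1$ and $c=n$.

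First we pin down the types of the two blocks. The brace $\{b,b+1\}$ lies in $\rho$, so $r_\rho(B)=1$ and $\ell_\rho(B')=1$. Since $\rho$ is an interval partition with blocks of size at most two, $\{a-1,a\}$ cannot also be a brace when $a=b$ (singleton case), because $b$ already sits in the brace $\{b,b+1\}$. In general, $B$ being 1-connected with $r_\rho(B)=1$ forces $\ell_\rho(B)=0$, so either $a=1$ or $\{a-1,a\}\notin\rho$. Symmetrically, either $c=n$ or $\{c,c+1\}\notin\rho$.

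Now suppose $[a,c]\neq[n]$ and let $\pi_0=\{[a,c],[n]\setminus[a,c]\}$. This is noncrossing, because the complement of an interval together with the interval is noncrossing. We check two things.
\begin{itemize}
\item $\pi\le\pi_0$. Every block of $\pi$ other than $B,B'$ lies either inside $I(B)$, inside $I(B')$, or outside $[a,c]$. Indeed, a block meeting both $[a,c]$ and its complement would have to cross $B$ or $B'$, or contain points on both sides of $[a,c]$ while having an element inside. The latter would nest $B$ or $B'$ strictly inside it; since $B$ and $B'$ are adjacent ($b$ and $b+1$ are consecutive), a block with an element in $[a,c]$ must lie in $I(B)$ or $I(B')$ by the noncrossing property.
\item $\rho\le\pi_0$. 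The blocks of $\rho$ are intervals, so the only way to fail is a brace straddling $a-1,a$ or $c,c+1$, and both were excluded above.
\end{itemize}
Hence $\pi\vee\rho\le\pi_0<\hat 1_n$, which is a contradiction. This is the same mechanism as in the proof of Lemma~\ref{lem:0-connected}.

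The main care point is the first bullet, specifically excluding a block $D$ with $\min D<a$, $\max D>c$ and some element strictly inside $[a,c]$. Such an element lies in $I(B)$ or $I(B')$ but not in $B\cup B'$, so it sits in a gap of $B$ (or of $B'$). Then $D$ crosses $B$ (or $B'$), because $D$ has elements on both sides of $B$ and also one between two elements of $B$, which contradicts noncrossingness.
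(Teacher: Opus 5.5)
Your proof is correct and follows the paper's argument: the two-block partition $\{I(B)\cup I(B'),\,[n]\setminus(I(B)\cup I(B'))\}$ dominates $\rho$, because 1-connectedness of $B$ and $B'$ excludes braces at $\{a-1,a\}$ and $\{c,c+1\}$. It also dominates $\pi$ by noncrossingness, which contradicts $\pi\vee\rho=\hat 1_n$, and you supply the details the paper leaves implicit. One small point: the crossing argument in your last paragraph needs only one element of $D$ outside $[a,c]$, so it also covers a block $D$ that extends past $[a,c]$ on only one side, not just the two-sided case you single out.
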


\begin{proof}

\begin{figure}[h]
  \centering
  \begin{tikzpicture}[scale=0.07]
    \node at (-4, 4){$\pi$:};
    
      \node at (24, 11){$B$};
      
      \node at (68, 11){$B^\prime$};
  
    \draw[thline] (2,0)--(2,7.5);
    \draw[thline] (20,0)--(20,7.5);
    \draw[thline] (26,0)--(26,7.5);
    \draw[thline] (44,0)--(44,7.5);
    \draw[thline] (50,0)--(50,7.5);
    \draw[thline] (2,7.5)--(50,7.5);
    
    \draw (8,0)--(8,4.5);
    \draw (14,0)--(14,4.5);

    \draw (32,0)--(32,4.5);
    \draw (38,0)--(38,4.5);

    \draw[thline] (56,0)--(56,7.5);
    \draw[thline] (62,0)--(62,7.5);
    \draw[thline] (80,0)--(80,7.5);
    \draw[thline] (56,7.5)--(80,7.5);
    
    \draw (68,0)--(68,4.5);
    \draw (74,0)--(74,4.5);
    \draw (8,4.5)--(14,4.5);
    \draw (32,4.5)--(38,4.5);
    \draw (68,4.5)--(74,4.5);
    
    \node at(-4, -7){$\rho$:};
    
    \draw (2,-9)--(2,-4.5);
    \draw (8,-9)--(8,-4.5);
    \draw (14,-9)--(14,-4.5);
    \draw (20,-9)--(20,-4.5);
    \draw (26,-9)--(26,-4.5);
    \draw (32,-9)--(32,-4.5);
    \draw (38,-9)--(38,-4.5);
    \draw (44,-9)--(44,-4.5);
    \draw[thline] (50,-9)--(50,-4.5);
    \draw[thline] (56,-9)--(56,-4.5);
    \draw (62,-9)--(62,-4.5);
    \draw (68,-9)--(68,-4.5);
    \draw (74,-9)--(74,-4.5);
    \draw (80,-9)--(80,-4.5);
    \draw (2,-4.5)--(8,-4.5);
    \draw (14,-4.5)--(20,-4.5);
    \draw (26,-4.5)--(32,-4.5);
    \draw (38,-4.5)--(44,-4.5);
    \draw[thline] (50,-4.5)--(56,-4.5);
    \draw (62,-4.5)--(68,-4.5);
    \draw (74,-4.5)--(80,-4.5);
  \end{tikzpicture}
  
  \caption{Two adjacent 1-connected  blocks.}
  \label{fig:adjacentinnerodd}
\end{figure}

  Let $ I = I(B)\cup I(B')$ and  let $ \pi_0 $ be the partition with blocks $ I $ and $ [n]\setminus I $. Note that, according to Definition~\ref{def:multe},  $ I $  is an interval which is a union of blocks from $\rho$.  It follows that both $\pi\leq\pi_0$ and $\rho\leq\pi_0$,
and consequently $\hat{1}_n=\pi\vee\rho\leq\pi_0$.
  Hence  $I=[n]$.
  \end{proof}

\begin{lemm}\label{prop:tylkowieksze}
  Fix a Fibonacci partition $ \rho\in \Int_{1,2}(n)$. Then
$\pi\in \CompOdd{\rho} $ if and only if  
    $  \pi \geq \bar \rho$.
\end{lemm}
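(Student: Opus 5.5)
The plan is to translate both conditions into statements about adjacent pairs $\{i,i+1\}$ and the pair $\{1,n\}$, and then compare them. The key reformulation is that $\pi\geq\bar\rho$ holds exactly when two things happen. First, $i$ and $i+1$ lie in the same block of $\pi$ whenever $\{i,i+1\}$ is not a brace of $\rho$. Second, $1$ and $n$ lie in the same block of $\pi$. This is read off Definition~\ref{c2}. When $q\geq 2$, each block of $\bar\rho$ is a run of consecutive points with no brace inside it, taken cyclically, since the outer block glues $n$ to $1$. When $q\leq1$, these two conditions force $\pi=\hat 1_n$, because all adjacencies except at most one are merged and the cyclic link $1\sim n$ closes the remaining gap; this agrees with $\bar\rho=\hat1_n$. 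Throughout, $\pi$ is a noncrossing partition of $[n]$.

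($\Leftarrow$) Assume $\pi\geq\bar\rho$. Then $\pi\vee\rho\geq\bar\rho\vee\rho=\hat1_n$ by the Remark after Definition~\ref{c2}, so $\pi\in\uComp{\rho}$. Now take a block $B$ with $\min(B)>1$. Since $\min(B)-1\notin B$, the reformulation forces $\{\min(B)-1,\min(B)\}$ to be a brace, so $\ell_\rho(B)=1$. Symmetrically, $r_\rho(B)=1$ whenever $\max(B)<n$. The block containing $1$ also contains $n$, so it has $\ell_\rho=r_\rho=0$ and is 0-connected. Every other block avoids both $1$ and $n$ and is therefore 2-connected. Hence $\pi\in\CompOdd{\rho}$.

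($\Rightarrow$) Assume $\pi\in\CompOdd{\rho}$.

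First, $1$ and $n$ share a block. The block containing $1$ has $\ell_\rho=0$, so it cannot be 2-connected and must be 0-connected. Lemma~\ref{lem:0-connected} then says it is the outer block with padding interval $[n]$, so it contains $n$.

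Next, suppose $i\in B$ and $i+1\in B'$ with $B\neq B'$. Noncrossingness gives $i=\max(B)$ or $i+1=\min(B')$. Otherwise $B'$ contains some $k<i$ and $B$ contains some $j>i+1$, and $k<i<i+1<j$ with $k,i+1\in B'$ and $i,j\in B$ is a crossing.

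Say $i=\max(B)<n$. If $r_\rho(B)=0$, then $B$ is not 2-connected, so it is 0-connected. By Lemma~\ref{lem:0-connected} its padding interval is $[n]$, contradicting $\max(B)<n$. Hence $r_\rho(B)=1$, i.e.\ $\{i,i+1\}$ is a brace. The case $i+1=\min(B')>1$ is symmetric, using $\ell_\rho(B')$.

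So every non-brace adjacency lies inside a block of $\pi$, and together with $1\sim n$ the reformulation gives $\pi\geq\bar\rho$.

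The only delicate points are the bookkeeping of the reformulation itself, in particular the degenerate cases $q\leq1$ including $\rho=\hat0_n$, and the small noncrossing argument about adjacent points in distinct blocks. I expect the former to need the most care in writing. The real engine of the proof is Lemma~\ref{lem:0-connected}, which rules out 0-connected blocks other than the full outer one.
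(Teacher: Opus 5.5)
Your proof is correct, and for the forward direction it takes a different route from the paper.

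\textbf{The paper's argument.} The paper works block by block in $\bar\rho$. For an inner block $\{a,\ldots,b\}$ it takes the $\pi$-blocks $A\ni a$ and $B\ni b$. If $A\neq B$, it uses the fact that the endpoints of 2-connected blocks sit on braces to produce a crossing $b'<a<b<a'$. The case of at most one brace is handled separately, by asserting $\pi=\hat 1_n$ without justification. Two further points are left implicit: the outer block $\{p_q+1,\ldots,n\}\cup\{1,\ldots,p_1\}$ of $\bar\rho$, and the fact that the points strictly between $a$ and $b$ join the block of $a$ and $b$.

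\textbf{Your argument.} You encode $\bar\rho$ as the partition generated by two kinds of pairs:
\begin{enumerate}
\item the non-brace adjacencies $\{i,i+1\}$;
\item the cyclic link $\{1,n\}$.
\end{enumerate}
You then show that any adjacency split by $\pi$ must occur at an endpoint of a block not containing $1$ or $n$. Such an endpoint must be a 2-connected end and hence a brace. This local argument handles the outer block, the interior points and the degenerate cases $q\leq 1$ (including $\rho=\hat 0_n$) uniformly. In particular it proves the paper's unproved claim that $\pi=\hat 1_n$ when $\rho$ has at most one brace. The cost is the small verification that the generated partition really is $\bar\rho$ in all cases, and you carry this out correctly.

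\textbf{Trade-off.} The paper's version stays closer to the ``bridge and pillar'' picture of 2-connected blocks. However, it needs a WLOG and an unstated filling-in step.

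\textbf{The converse.} Your argument is essentially the paper's, but you make explicit the membership $\pi\in\uComp{\rho}$ via $\pi\vee\rho\geq\bar\rho\vee\rho=\hat 1_n$. The paper uses this only tacitly. You also check $\ell_\rho$ and $r_\rho$ directly instead of appealing to ``unions of 2-connected blocks of $\bar\rho$''.
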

\begin{proof}
  Assume that $ \pi \geq \bar \rho$. Now by construction, $\bar\rho$ is irreducible (hence so is $\pi$), and
  by definition all inner blocks of $\bar\rho$ are $\rho$-2-connected and  so are the their unions, in particular the inner blocks of $\pi$. 

  For the converse,
 assume that $ \pi \in \CompOdd{\rho} $. If $\rho$ has at most one brace, then
 $ \pi = \hat{1}_n$ and the conclusion follows, so we can assume that $ \rho$
 has at least two braces. We have to show that every block of $\bar \rho$ is
 contained in a block of $\pi$.
 
 \noindent
 Let $\{a,\ldots,b\}$ be a block of $\bar \rho$,
 then  $\rho$ contains the braces $\{a-1, a\}$ and $\{b, b+1\}$
 and all elements between them form singletons of $\rho$.

 \noindent
 Let $A$ be the block of $\pi$  containing $a$
 and $B$ the block containing $b$.
 By Lemma~\ref{lem:0-connected} at most one of them is 0-connected.
 Assume wlog that $A$ is 2-connected
and 
 suppose that $A\ne B$.  
 Since $A$ is 2-connected,  its right end point $a'$ is  the left element of a
 brace in $\rho$  and thus $a'>b+1$.
 It follows that $B$ is nested inside $A$ and therefore not
 an outer block and we infer from Lemma~\ref{lem:0-connected} that $B$ is
 2-connected.
 Therefore left end point  $b'$ of $B$
 is the right element of another brace of $\rho$ and thus $b'<a-1$.
 Therefore $b'<a<b<a'$ is a crossing of the block $A$ and $B$,
 contradicting the assumption that $\pi$ is noncrossing.
 It follows that $a$ and $b$ lie in the same block of $\pi$
 and the singletons of $\rho$ between $a$ and $b$ must all be elements of
 this block as well.

\end{proof}

\section{An involution on  the set of $1$-connected partitions}
\label{sec:involution}

In this section we shall construct an involution which will be used (in Section~\ref{sec:cumulants}) to show the cancellation of the contribution of 1-connected partitions in formula \eqref{twr:produktargumentow} when applied to polynomials as in \eqref{eqn:1.1} for skew symmetric matrices $\big[ a_{i, j} \big]_{i, j=1}^n $.
	
\begin{defi} \label{lemtwoinnerodd}
  \label{def:flippartition}
  A pair $(\rho,\pi)\in \oneconn{n}$ is said to be
  \begin{enumerate}[label=\arabic*.,leftmargin=2em]
   \item 
    \textit{type I} if $\pi$ has two inner 1-connected blocks with a common pivot brace in $\rho$;
  
   \item  \textit{type IIa} if $\pi$  has exactly one 1-connected block with pivot brace $\{1, 2\}\in\rho$;
   \item  \textit{type IIb} if $\pi$ has exactly one 1-connected block with pivot brace $\{n-1, n\}\in\rho$;
 
   \item
   \textit{type IIIa} if $\pi$ is not type I and II and the leftmost inner 1-connected block is left connected;
   \item
   \textit{type IIIb}  if $\pi$ is not type I and II and the leftmost inner 1-connected block is right connected.
  \end{enumerate}
We have the disjoint union $\oneconn{n}=\oneconntype{n}{I}\sqcup \oneconntype{n}{IIa}\sqcup \oneconntype{n}{IIb}\sqcup \oneconntype{n}{IIIa}\sqcup \oneconntype{n}{IIIb}$.
\end{defi}

Examples of different types of partitions are shown in Figure~\ref{fig:flippartitions} below.
\begin{figure}[h]
  \begin{subfigure}[t]{0.2\textwidth}
    \centering
    \begin{tikzpicture}[scale=0.05]
      \draw (2,0)--(2,7.5);
      \draw (8,0)--(8,4.5);
      \draw (14,0)--(14,4.5);
      \draw (20,0)--(20,7.5);
      \draw (26,0)--(26,7.5);
      \draw (32,0)--(32,4.5);
      \draw (38,0)--(38,4.5);
      \draw (44,0)--(44,7.5);
      \draw (50,0)--(50,7.5);
      \draw (56,0)--(56,7.5);
      \draw (62,0)--(62,7.5);
      \draw (68,0)--(68,4.5);
      \draw (74,0)--(74,4.5);
      \draw (80,0)--(80,7.5);
      \draw (8,4.5)--(14,4.5);
      \draw (32,4.5)--(38,4.5);
      \draw (2,7.5)--(50,7.5);
      \draw (68,4.5)--(74,4.5);
      \draw (56,7.5)--(80,7.5);

      \draw (2,-9)--(2,-4.5);
      \draw (8,-9)--(8,-4.5);
      \draw (14,-9)--(14,-4.5);
      \draw (20,-9)--(20,-4.5);
      \draw (26,-9)--(26,-4.5);
      \draw (32,-9)--(32,-4.5);
      \draw (38,-9)--(38,-4.5);
      \draw (44,-9)--(44,-4.5);
      \draw[thline] (50,-9)--(50,-4.5);
      \draw[thline] (56,-9)--(56,-4.5);
      \draw (62,-9)--(62,-4.5);
      \draw (68,-9)--(68,-4.5);
      \draw (74,-9)--(74,-4.5);
      \draw (80,-9)--(80,-4.5);
      \draw (2,-4.5)--(8,-4.5);
      \draw (14,-4.5)--(20,-4.5);
      \draw (26,-4.5)--(32,-4.5);
      \draw (38,-4.5)--(44,-4.5);
      \draw[thline] (50,-4.5)--(56,-4.5);
      \draw (62,-4.5)--(68,-4.5);
      \draw (74,-4.5)--(80,-4.5);
    \end{tikzpicture} 
    \vskip1ex
    
    Type I
  \end{subfigure}
  ~
  \begin{subfigure}[t]{0.3\textwidth}
    \centering
    \begin{tikzpicture}[scale=0.05]
      \draw (2,0)--(2,10.5);
      \draw (8,0)--(8,7.5);
      \draw (14,0)--(14,7.5);
      \draw (20,0)--(20,4.5);
      \draw (26,0)--(26,4.5);
      \draw (32,0)--(32,7.5);
      \draw (38,0)--(38,10.5);
      \draw (44,0)--(44,4.5);
      \draw (50,0)--(50,4.5);
      \draw (56,0)--(56,10.5);
      \draw (20,4.5)--(26,4.5);
      \draw (8,7.5)--(32,7.5);
      \draw (44,4.5)--(50,4.5);
      \draw (2,10.5)--(56,10.5);

      \draw[thline] (2,-9)--(2,-4.5);
      \draw[thline] (8,-9)--(8,-4.5);
      \draw (14,-9)--(14,-4.5);
      \draw (20,-9)--(20,-4.5);
      \draw (26,-9)--(26,-4.5);
      \draw (32,-9)--(32,-4.5);
      \draw (38,-9)--(38,-4.5);
      \draw (44,-9)--(44,-4.5);
      \draw (50,-9)--(50,-4.5);
      \draw (56,-9)--(56,-4.5);
      \draw[thline] (2,-4.5)--(8,-4.5);
      \draw (14,-4.5)--(20,-4.5);
      \draw (26,-4.5)--(32,-4.5);
      \draw (38,-4.5)--(44,-4.5);
      \draw (50,-4.5)--(56,-4.5);
    \end{tikzpicture}

    \vskip1ex
    
    Type IIa
  \end{subfigure}
  ~
  \begin{subfigure}[t]{0.2\textwidth}
    \centering
    \begin{tikzpicture}[scale = 0.05]
      \draw (2,0)--(2,10.5);
      \draw (8,0)--(8,4.5);
      \draw (14,0)--(14,4.5);
      \draw (20,0)--(20,10.5);
      \draw (26,0)--(26,7.5);
      \draw (32,0)--(32,4.5);
      \draw (38,0)--(38,4.5);
      \draw (44,0)--(44,7.5);
      \draw (50,0)--(50,7.5);
      \draw (56,0)--(56,10.5);
      \draw (8,4.5)--(14,4.5);
      \draw (32,4.5)--(38,4.5);
      \draw (26,7.5)--(50,7.5);
      \draw (2,10.5)--(56,10.5);

      \draw (2,-9)--(2,-4.5);
      \draw (8,-9)--(8,-4.5);
      \draw (14,-9)--(14,-4.5);
      \draw (20,-9)--(20,-4.5);
      \draw (26,-9)--(26,-4.5);
      \draw (32,-9)--(32,-4.5);
      \draw (38,-9)--(38,-4.5);
      \draw (44,-9)--(44,-4.5);
      \draw[thline] (50,-9)--(50,-4.5);
      \draw[thline] (56,-9)--(56,-4.5);
      \draw (2,-4.5)--(8,-4.5);
      \draw (14,-4.5)--(20,-4.5);
      \draw (26,-4.5)--(32,-4.5);
      \draw (38,-4.5)--(44,-4.5);
      \draw[thline] (50,-4.5)--(56,-4.5);
    \end{tikzpicture} 
    
    \vskip1ex
    
    Type IIb
  \end{subfigure}
  ~
  \begin{subfigure}[t]{0.2\textwidth}
    \centering
    \begin{tikzpicture}[scale=0.05]
      \draw (2,0)--(2,10.5);
      \draw (8,0)--(8,10.5);
      \draw (14,0)--(14,10.5);
      \draw (20,0)--(20,4.5);
      \draw (26,0)--(26,4.5);
      \draw (32,0)--(32,4.5);
      \draw (38,0)--(38,10.5);
      \draw (44,0)--(44,4.5);
      \draw (50,0)--(50,4.5);
      \draw (56,0)--(56,10.5);
      \draw (20,4.5)--(26,4.5);
      \draw (20,4.5)--(32,4.5);
      \draw (44,4.5)--(50,4.5);
      \draw (2,10.5)--(56,10.5);

      \draw (2,-9)--(2,-4.5);
      \draw (8,-9)--(8,-4.5);
      \draw [thline] (14,-9)--(14,-4.5);
      \draw [thline](20,-9)--(20,-4.5);
      \draw (26,-9)--(26,-4.5);
      \draw (32,-9)--(32,-4.5);
      \draw (38,-9)--(38,-4.5);
      \draw (44,-9)--(44,-4.5);
      \draw (50,-9)--(50,-4.5);
      \draw (56,-9)--(56,-4.5);
      \draw (2,-4.5)--(8,-4.5);
      \draw [thline](14,-4.5)--(20,-4.5);
      \draw (26,-4.5)--(32,-4.5);
      \draw (38,-4.5)--(44,-4.5);
      \draw (50,-4.5)--(56,-4.5);
    \end{tikzpicture}

    \vskip1ex
    
    Type IIIa
  \end{subfigure}
  \caption{Examples of type I, II and III partitions.}
  \label{fig:flippartitions}
\end{figure}

\label{ssec:involution}
In the next paragraphs we shall define an involution $ \Phi : \oneconn{n} \rightarrow \oneconn{n} $. 
To this end we shall construct for each $ (\rho, \pi) \in \oneconn{n}$ a certain permutation
$ \sigma $ on $ [n ]$ such that $ (\sigma\cdot \rho, \sigma \cdot \pi )\in
\oneconn{n}$; moreover, the block structure and type is preserved.
Intuitively, the involution follows  the patterns layed out in
Figures~\ref{fig:involutiontypeI},~\ref{fig:involutiontypeII} and~\ref{fig:involutiontypeIII}.

\begin{enumerate}[(I),leftmargin=2em]
 \item 
 A partition of type \textit{I} is mapped to another partition of type I by
  flipping the left and right subpartitions, according to the scheme
  shown in Figures~\ref{fig:involutiontypeI}.
 \begin{figure}[h]
    \begin{subfigure}[c]{0.35\textwidth}
      \centering
      \begin{tikzpicture}[scale=0.5]
        \path [afill,thick] (0,0) rectangle (5.2,1.2);
        \path [bfill,thick] (8.0,0) rectangle (13.2,1.2);
        \draw [thick] (5.2,1.2)--(6.3,1.2)--(6.3,0);
        \draw [thick] (8.0,1.2)--(6.9,1.2)--(6.9,0);
        
        \draw[white,fill=white] (2.5,0.6) circle (0.5); 
        \node at (2.5,0.6) {$A$};
        
        \draw[white,fill=white] (10.5,0.6) circle (0.5); 
        \node at (10.5,0.6) {$B$};
        \node at (6.6,-0.5){$\circ \ \bullet $};
        
        \node[color=white] at (1,2) {.};
      \end{tikzpicture} 
    \end{subfigure}
    $\qquad\qquad\longrightarrow\qquad$
    \begin{subfigure}[c]{0.35\textwidth}
      \centering
      \begin{tikzpicture}[scale=0.5]
        \path [bfill,thick] (0,0) rectangle (5.2,1.2);
        \path [afill,thick] (8.0,0) rectangle (13.2,1.2);
        \draw [thick] (5.2,1.2)--(6.3,1.2)--(6.3,0);
        \draw [thick] (8.0,1.2)--(6.9,1.2)--(6.9,0);
        
        \draw[white,fill=white] (2.5,0.6) circle (0.5); 
        \node at (2.5,0.6) {$B$};
        
        \draw[white,fill=white] (10.5,0.6) circle (0.5); 
        \node at (10.5,0.6) {$A$};
        \node at (6.6,-0.5){$ \bullet   \ \circ $};
        
        \node[color=white] at (1,2) {.};
      \end{tikzpicture} 
    \end{subfigure}
    
    \caption{Involution on $\oneconntype{n}{I}$.} \label{fig:involutiontypeI}
  \end{figure}

 Suppose that the pair $(\rho, \pi) \in \oneconn{n} $ is of type \textit{I}. According to Lemma~\ref{lemmatwoone}, there exist some $ k  \in [ n-1]$ such that the padding intervals of the leftmost, respectively  rightmost, 1-connected inner block are $\{1, 2, \dots, k \}$, respectively $\{k+1, \dots, n \}$ and $\{ k, k+1\} $ is a brace in $ \rho$. The involution
  $\Phi$ maps the pair $(\rho, \pi)$ to the pair $(\sigma_{I,n,k} \cdot \rho, \sigma_{I,n,k}\cdot \pi)$, where  the permutation $\sigma_{I,n,k}:[n] \rightarrow [n]$ is given by
   \begin{align*}
    \sigma_{I,n,k}(i)
    &=
      \begin{cases}
        i+n-k+1 & \text{for $1\leq i\leq k-1$ }\\
        n-i +1  & \text{for $i=k$ or $i=k+1$}\\
i-( k+1) & \text{for $k+2\leq i\leq n$ }
      \end{cases}
                   ;
  \end{align*}
i.e., $\sigma_{I, n, k}$ exchanges the intervals $[ 1, 2,\dots, k-1]$ and $[k+2, k+3, \dots, n]$ and the points $k $ and $k+1$. Since both these intervals are unions of blocks of $ \rho$ and  $ \{k, k+1\}$ is a brace in $ \rho$ it follows that 
 $ \sigma_{I, n, k} \cdot \rho \in \Int_{1,2}(n)$. Moreover, $ \sigma_{I, n, k} \cdot \pi $ is in $ \oneconn{\sigma_{I, n, k} \cdot\rho}$ since $ \sigma_{I, n, k}\cdot \rho \vee \sigma_{I, n, k}\cdot \pi = \sigma_{I, n, k}\cdot (\rho \vee \pi )$
 and 
 $\sigma_{I, n, k} \cdot \pi $ is of type \textit{I} (see Figure~\ref{fig:exinvolutiontypeI} for a specific example).

 Finally, note that $  \sigma_{I,n,k} =\sigma_{I,n,n-k}^{-1}$, hence $ \Phi$ is an involution of $\oneconntype{n}{I}$.
   \begin{figure}[h]
    \begin{subfigure}[c]{0.3\textwidth}
      \centering
      \begin{tikzpicture}[scale=0.04]
        \draw[style=aline] (2,0)--(2,7.5);
        \draw[style=aline] (8,0)--(8,4.5);
        \draw[style=aline] (14,0)--(14,4.5);
        \draw[style=aline] (20,0)--(20,4.5);
        \draw[style=aline] (26,0)--(26,4.5);
        \draw[style=aline] (32,0)--(32,4.5);
        \draw[style=aline] (38,0)--(38,4.5);
        \draw[style=aline] (44,0)--(44,7.5);
        \draw[style=aline] (2,7.5)--(44,7.5);
        \draw[style=aline] (8,4.5)--(14,4.5);
        \draw[style=aline] (20,4.5)--(38,4.5);
        
        \draw (50,0)--(50,7.5)--(44,7.5);
        \draw (56,0)--(56,7.5)--(62,7.5);
        
        \draw[style=bline] (62,0)--(62,7.5);
        \draw[style=bline] (68,0)--(68,4.5);
        \draw[style=bline] (74,0)--(74,4.5);
        \draw[style=bline] (80,0)--(80,4.5);
        \draw[style=bline] (86,0)--(86,4.5);
        \draw[style=bline] (92,0)--(92,4.5);
        \draw[style=bline] (98,0)--(98,4.5);
        \draw[style=bline] (104,0)--(104,4.5);
        \draw[style=bline] (110,0)--(110,4.5);
        \draw[style=bline] (116,0)--(116,7.5);
        \draw[style=bline] (68,4.5)--(74,4.5);
        \draw[style=bline] (80,4.5)--(86,4.5);
        \draw[style=bline] (92,4.5)--(110,4.5);
        \draw[style=bline] (62,7.5)--(116,7.5);

        \draw (2,-9)--(2,-4.5);
        \draw (8,-9)--(8,-4.5);
        \draw (14,-9)--(14,-4.5);
        \draw (20,-9)--(20,-4.5);
        \draw (26,-9)--(26,-4.5);
        \draw (32,-9)--(32,-4.5);
        \draw (38,-9)--(38,-4.5);
        \draw (44,-9)--(44,-4.5);
        \draw[thick] (50,-9)--(50,-4.5);
        \draw[thick] (56,-9)--(56,-4.5);
        \draw (62,-9)--(62,-4.5);
        \draw (68,-9)--(68,-4.5);
        \draw (74,-9)--(74,-4.5);
        \draw (80,-9)--(80,-4.5);
        \draw (86,-9)--(86,-4.5);
        \draw (92,-9)--(92,-4.5);
        \draw (98,-9)--(98,-4.5);
        \draw (104,-9)--(104,-4.5);
        \draw (110,-9)--(110,-4.5);
        \draw (116,-9)--(116,-4.5);
        \draw (2,-4.5)--(8,-4.5);
        \draw (14,-4.5)--(20,-4.5);
        \draw (26,-4.5)--(32,-4.5);
        \draw (38,-4.5)--(44,-4.5);
        \draw[thick] (50,-4.5)--(56,-4.5);
        \draw (62,-4.5)--(68,-4.5);
        \draw (74,-4.5)--(80,-4.5);
        \draw (86,-4.5)--(92,-4.5);
\draw (110,-4.5)--(116,-4.5);
        
\node at (65,-20) {$\rho$};
        \node at (65,20) {$\pi$};
      \end{tikzpicture} 
    \end{subfigure}
    $\longrightarrow$
    \begin{subfigure}[c]{0.3\textwidth}
      \centering
      \begin{tikzpicture}[scale=0.04]
        \draw[style=bline] (2,0)--(2,7.5);
        \draw[style=bline] (8,0)--(8,4.5);
        \draw[style=bline] (14,0)--(14,4.5);
        \draw[style=bline] (20,0)--(20,4.5);
        \draw[style=bline] (26,0)--(26,4.5);
        \draw[style=bline] (32,0)--(32,4.5);
        \draw[style=bline] (38,0)--(38,4.5);
        \draw[style=bline] (44,0)--(44,4.5);
        \draw[style=bline] (50,0)--(50,4.5);
        \draw[style=bline] (56,0)--(56,7.5);

        \draw (62,0)--(62,7.5)--(56,7.5);
        \draw (68,0)--(68,7.5)--(74,7.5);
        
        \draw[style=aline] (74,0)--(74,7.5);
        \draw[style=aline] (80,0)--(80,4.5);
        \draw[style=aline] (86,0)--(86,4.5);
        \draw[style=aline] (92,0)--(92,4.5);
        \draw[style=aline] (98,0)--(98,4.5);
        \draw[style=aline] (104,0)--(104,4.5);
        \draw[style=aline] (110,0)--(110,4.5);
        \draw[style=aline] (116,0)--(116,7.5);
        
        \draw[style=bline] (8,4.5)--(14,4.5);
        \draw[style=bline] (20,4.5)--(26,4.5);
        \draw[style=bline] (32,4.5)--(50,4.5);
        \draw[style=bline] (2,7.5)--(56,7.5);
        
        \draw[style=aline] (80,4.5)--(86,4.5);
        \draw[style=aline] (92,4.5)--(110,4.5);
        \draw[style=aline] (74,7.5)--(116,7.5);
        
        \draw (2,-9)--(2,-4.5);
        \draw (8,-9)--(8,-4.5);
        \draw (14,-9)--(14,-4.5);
        \draw (20,-9)--(20,-4.5);
        \draw (26,-9)--(26,-4.5);
        \draw (32,-9)--(32,-4.5);
        \draw (38,-9)--(38,-4.5);
        \draw (44,-9)--(44,-4.5);
        \draw (50,-9)--(50,-4.5);
        \draw (56,-9)--(56,-4.5);
        \draw[thick] (62,-9)--(62,-4.5);
        \draw[thick] (68,-9)--(68,-4.5);
        \draw (74,-9)--(74,-4.5);
        \draw (80,-9)--(80,-4.5);
        \draw (86,-9)--(86,-4.5);
        \draw (92,-9)--(92,-4.5);
        \draw (98,-9)--(98,-4.5);
        \draw (104,-9)--(104,-4.5);
        \draw (110,-9)--(110,-4.5);
        \draw (116,-9)--(116,-4.5);
        \draw (2,-4.5)--(8,-4.5);
        \draw (14,-4.5)--(20,-4.5);
        \draw (26,-4.5)--(32,-4.5);
\draw (50,-4.5)--(56,-4.5);
        \draw[thick] (62,-4.5)--(68,-4.5);
        \draw (74,-4.5)--(80,-4.5);
        \draw (86,-4.5)--(92,-4.5);
        \draw (98,-4.5)--(104,-4.5);
        \draw (110,-4.5)--(116,-4.5);
        
\node at (65,-20) {$\rho'$};
        \node at (65,20) {$\pi'$};
      \end{tikzpicture}

    \end{subfigure}
    \vskip1ex
    \caption{An example of the involution of partitions of type I.} \label{fig:exinvolutiontypeI}
  \end{figure}

 \item type \textit{IIa} is mapped to type \textit{IIb} and vice versa, see
  Figure~\ref{fig:involutiontypeII}.

  \begin{figure}[h]
    \begin{subfigure}[c]{0.35\textwidth}
      \centering
      \begin{tikzpicture}[scale=0.5]
        \draw [white] (0,3.5) circle (0);
        \path [afill,thick] (1.8,0) rectangle (7.0,1.2);
        \path [bfill,thick] (7.6,0) rectangle (12.8,1.8);
        \draw [thick] (0.25,0)--(0.25,1.8)--(7.6,1.8);
        \draw [thick] (0.9,0)--(0.9,1.2)--(1.8,1.2);
        
        \draw[white,fill=white] (4.5,0.6) circle (0.5); 
        \node at (4.5,0.6) {$A$};
        
        \draw[white,fill=white] (10.3,0.9) circle (0.5); 
        \node at (10.3,0.9) {$B$};
        \node at (0.6,-0.5){$\circ \ \bullet$};
      \end{tikzpicture} 
    \end{subfigure}
    $\qquad\qquad\longleftrightarrow\qquad$
    \begin{subfigure}[c]{0.35\textwidth}
      \centering
      \begin{tikzpicture}[scale=0.5]
        \draw [white] (0,3.5) circle (0);
        \path [bfill,thick] (0,0) rectangle (5.2,1.8);
        \path [afill,thick] (5.8,0) rectangle (11.0,1.2);
        \draw [thick] (12.55,0)--(12.55,1.8)--(5.2,1.8);
        \draw [thick] (11.9,0)--(11.9,1.2)--(11.0,1.2);
        
        \draw[white,fill=white] (2.6,0.9) circle (0.5); 
        \node at (2.6,0.85) {$B$};
        
        \draw[white,fill=white] (8.5,0.6) circle (0.5); 
        \node at (8.5,0.6) {$A$};
        \node at (12.3,-0.5){$\bullet \  \circ$};
      \end{tikzpicture} 
    \end{subfigure}
    \caption{Involution on $\oneconntype{n}{II}$.} \label{fig:involutiontypeII}
  \end{figure}

First assume that $(\rho, \pi) \in \oneconntype{n}{IIa}$. By Definition~\ref{lemtwoinnerodd} there exists $ k \in \{2, 3, \dots, n-1\}$ such that the padding 
 interval of the unique
  1-connected block is $\{2,3,\dots ,k\}$.
In this case we define 
$\Phi((\rho, \pi)) = (\sigma_{IIa,n,k} \cdot \rho, \sigma_{IIa,n,k} \cdot \pi) $ 
where the permutation
$\sigma_{IIa,n,k}: [n] \rightarrow [ n ] $ is given by
  $$\sigma_{IIa,n,k}(i)
    =
      \begin{cases}
        n-i +1   & \text{for $i\in \{1,2 \}$}\\
i + n-2-k & \text{for $3\leq i\leq  k$ }\\
        i  -k & \text{for $k+1\leq i\leq n$ }
      \end{cases}
$$
i.e. $\sigma_{IIa, n, k}$ exchanges the intervals $\{3,4,\dots ,k\}$ 
and $\{k+1,k+2,\dots,n\}$ and maps the pair $(1, 2)$ to $(n, n-1)$.
As above, note that $\sigma_{IIa,n,k} \cdot \rho \in \Int_{1,2}(n) $  and that $ \sigma_{IIa,n,k} \cdot \pi $ in $\oneconn{\sigma_{IIa, n, k} \cdot\rho} $ and the pair $(\sigma_{IIa,n,k} \cdot \rho,\sigma_{IIa,n,k} \cdot \pi)$ is in $\oneconntype{n}{IIb}$ 
  (see Figure~\ref{fig:exinvolutiontypeII} for a specific example).
  
  In the case when $ (\rho,\pi) $ is type  \textit{IIb} we reverse the above process. Definition~\ref{lemtwoinnerodd} gives that there exists $k\in [n-2]$ such that the padding interval of the unique 1-connected block is $\{ k+1, k+2,\dots, n-1\}$.
  In this case we define 
$\Phi(\rho, \pi) = (\sigma_{IIb,n,k} \cdot \rho, \sigma_{IIb,n,k} \cdot \pi) $ 
where the permutation
$\sigma_{IIb,n,k}: [n] \rightarrow [ n ] $ is given by
\[
  \sigma_{IIb,n,k}(i)
    =
      \begin{cases}
        i + n-k & \text{for $1\leq i\leq  k$ }\\
        i-k+2 & \text{for $ k+1\leq i\leq n-2$ }\\
        n+1-i   & \text{for $i\in \{ n-1, n \} $}\\
\end{cases}
\]

 Note that $\sigma_{IIa,n,k} =\sigma_{IIb,n,n-k}^{-1} $, and hence
  $ \Phi$ is an involution on $\oneconntype{n}{II}$.

  \begin{figure}[h]
    \begin{subfigure}[c]{0.3\textwidth}
      \centering
      \begin{tikzpicture}[scale=0.04]
        \draw (2,0)--(2,10.5);
        \draw (8,0)--(8,7.5);
        \draw[style=aline] (14,0)--(14,7.5);
        \draw[style=aline] (20,0)--(20,4.5);
        \draw[style=aline] (26,0)--(26,4.5);
        \draw[style=aline] (32,0)--(32,4.5);
        \draw[style=aline] (38,0)--(38,4.5);
        \draw[style=aline] (44,0)--(44,4.5);
        \draw[style=aline] (50,0)--(50,4.5);
        \draw[style=aline] (56,0)--(56,7.5);
        \draw[style=aline] (14,7.5)--(56,7.5);
        \draw[style=aline] (20,4.5)--(26,4.5);
        \draw[style=aline] (32,4.5)--(50,4.5);
        \draw[style=aline] (8,7.5)--(14,7.5);
        
        \draw[style=bline] (62,0)--(62,10.5);
        \draw[style=bline] (68,0)--(68,4.5);
        \draw[style=bline] (74,0)--(74,4.5);
        \draw[style=bline] (80,0)--(80,4.5);
        \draw[style=bline] (86,0)--(86,4.5);
        \draw[style=bline] (92,0)--(92,4.5);
        \draw[style=bline] (98,0)--(98,4.5);
        \draw[style=bline] (104,0)--(104,4.5);
        \draw[style=bline] (110,0)--(110,4.5);
        \draw[style=bline] (116,0)--(116,10.5);
        \draw[style=bline] (68,4.5)--(74,4.5);
        \draw[style=bline] (80,4.5)--(86,4.5);
        \draw[style=bline] (92,4.5)--(110,4.5);
        \draw[style=bline] (62,10.5)--(116,10.5);
        
        \draw (2,10.5)--(62,10.5);
        
        \draw[thick] (2,-9)--(2,-4.5);
        \draw[thick] (8,-9)--(8,-4.5);
        \draw (14,-9)--(14,-4.5);
        \draw (20,-9)--(20,-4.5);
        \draw (26,-9)--(26,-4.5);
        \draw (32,-9)--(32,-4.5);
        \draw (38,-9)--(38,-4.5);
        \draw (44,-9)--(44,-4.5);
        \draw (50,-9)--(50,-4.5);
        \draw (56,-9)--(56,-4.5);
        \draw (62,-9)--(62,-4.5);
        \draw (68,-9)--(68,-4.5);
        \draw (74,-9)--(74,-4.5);
        \draw (80,-9)--(80,-4.5);
        \draw (86,-9)--(86,-4.5);
        \draw (92,-9)--(92,-4.5);
        \draw (98,-9)--(98,-4.5);
        \draw (104,-9)--(104,-4.5);
        \draw (110,-9)--(110,-4.5);
        \draw (116,-9)--(116,-4.5);
        \draw[thick] (2,-4.5)--(8,-4.5);
        \draw (14,-4.5)--(20,-4.5);
        \draw (26,-4.5)--(32,-4.5);
\draw (50,-4.5)--(56,-4.5);
        \draw (62,-4.5)--(68,-4.5);
        \draw (74,-4.5)--(80,-4.5);
        \draw (86,-4.5)--(92,-4.5);
        \draw (98,-4.5)--(104,-4.5);
        \draw (110,-4.5)--(116,-4.5);
        
\end{tikzpicture}
      
    \end{subfigure}
    $\longleftrightarrow$
    \begin{subfigure}[c]{0.3\textwidth}
      \centering
      \begin{tikzpicture}[scale=0.04]
        \draw[style=bline] (2,0)--(2,10.5);
        \draw[style=bline] (8,0)--(8,4.5);
        \draw[style=bline] (14,0)--(14,4.5);
        \draw[style=bline] (20,0)--(20,4.5);
        \draw[style=bline] (26,0)--(26,4.5);
        \draw[style=bline] (32,0)--(32,4.5);
        \draw[style=bline] (38,0)--(38,4.5);
        \draw[style=bline] (44,0)--(44,4.5);
        \draw[style=bline] (50,0)--(50,4.5);
        \draw[style=bline] (56,0)--(56,10.5);
        \draw[style=bline] (2,10.5)--(56,10.5);
        
        \draw[style=bline] (8,4.5)--(14,4.5);
        \draw[style=bline] (20,4.5)--(26,4.5);
        \draw[style=bline] (32,4.5)--(50,4.5);

        \draw (56,10.5)--(116,10.5);
        
        \draw[style=aline] (62,0)--(62,7.5);
        \draw[style=aline] (68,0)--(68,4.5);
        \draw[style=aline] (74,0)--(74,4.5);
        \draw[style=aline] (80,0)--(80,4.5);
        \draw[style=aline] (86,0)--(86,4.5);
        \draw[style=aline] (92,0)--(92,4.5);
        \draw[style=aline] (98,0)--(98,4.5);
        \draw[style=aline] (104,0)--(104,7.5);
        \draw[style=aline] (62,7.5)--(104,7.5);
        
        \draw[style=aline] (68,4.5)--(74,4.5);
        \draw[style=aline] (80,4.5)--(98,4.5);
        
        \draw (104,7.5)--(110,7.5);

        \draw (110,0)--(110,7.5);
        \draw (116,0)--(116,10.5);

        \draw (2,-9)--(2,-4.5);
        \draw (8,-9)--(8,-4.5);
        \draw (14,-9)--(14,-4.5);
        \draw (20,-9)--(20,-4.5);
        \draw (26,-9)--(26,-4.5);
        \draw (32,-9)--(32,-4.5);
        \draw (38,-9)--(38,-4.5);
        \draw (44,-9)--(44,-4.5);
        \draw (50,-9)--(50,-4.5);
        \draw (56,-9)--(56,-4.5);
        \draw (62,-9)--(62,-4.5);
        \draw (68,-9)--(68,-4.5);
        \draw (74,-9)--(74,-4.5);
        \draw (80,-9)--(80,-4.5);
        \draw (86,-9)--(86,-4.5);
        \draw (92,-9)--(92,-4.5);
        \draw (98,-9)--(98,-4.5);
        \draw (104,-9)--(104,-4.5);
        \draw[thick] (110,-9)--(110,-4.5);
        \draw[thick] (116,-9)--(116,-4.5);
        
        \draw (2,-4.5)--(8,-4.5);
        \draw (14,-4.5)--(20,-4.5);
        \draw (26,-4.5)--(32,-4.5);
        \draw (38,-4.5)--(44,-4.5);
        \draw (50,-4.5)--(56,-4.5);
        \draw (62,-4.5)--(68,-4.5);
        \draw (74,-4.5)--(80,-4.5);
\draw (98,-4.5)--(104,-4.5);
        \draw[thick] (110,-4.5)--(116,-4.5);

      \end{tikzpicture}
      
    \end{subfigure}
    \vskip1ex
    \caption{An example on $\oneconntype{n}{II}$.} \label{fig:exinvolutiontypeII}
  \end{figure}

 \item type \textit{IIIa} is mapped to \textit{type IIIb} and vice versa, see
  Figure~\ref{fig:involutiontypeIII}.
  \begin{figure}[h]
    
    \begin{subfigure}[c]{0.35\textwidth}
      \centering
      \begin{tikzpicture}[scale=0.5]
        \draw [white] (0,3.0) circle (0);
        
        \path [afill,thick] (2.0,0) rectangle (7.2,1.0);
          \draw [thick] (0.4,0)--(0.4,1.6);
          \draw [thick] (-0.5,1.6)--(7.5,1.6);
\draw [thick] (1.3,0)--(1.3,1.0)--(2.0,1.0);
        \draw [dashed] (-1.8,1.6)--(10.3,1.6);
        
        \node at (1.2,-0.5){\footnotesize{k \ k+1}};
        \node at (7.2,-0.5){\footnotesize{l}};
      \end{tikzpicture} 
    \end{subfigure}
    $\qquad\quad\longleftrightarrow\qquad$
    \begin{subfigure}[c]{0.35\textwidth}
      \centering
      \begin{tikzpicture}[scale=0.5]
        \draw [white] (0,3.0) circle (0);
        \path [afill,thick] (1.1,0) rectangle (6.3,1.0);
        \draw [thick] (7.9,1.6)--(7.9,0);
        \draw [thick] (0.7,1.6)--(8.5,1.6);
\draw [thick] (7.2,0)--(7.2,1.0)--(6.3,1.0);
        \draw [dashed] (-1.8,1.6)--(10.3,1.6);
        
        \node at (7.6,-0.5){\footnotesize{l-1\ l}};
         \node at (1.1,-0.5){\footnotesize{k}};
      \end{tikzpicture} 
    \end{subfigure}
    
    \caption{Involution on $\oneconntype{n}{III}$.} \label{fig:involutiontypeIII}
  \end{figure}
\end{enumerate}

 Suppose first that $(\rho, \pi)\in\oneconntype{n}{IIIa}$. By Definition~\ref{lemtwoinnerodd}, let $k<l$ be such that the leftmost inner 1-connected block starts at $k+1$, ends at $l $, and $\{k, k+1\}$ be a brace in $\rho$. Then we define $\Phi((\rho, \pi)) = (\rho^\prime, \pi^\prime) $, where $\rho^\prime =\sigma_{IIIa, n, k, l} \cdot \rho $, and $ \pi^\prime = \sigma_{IIIa, n, k, l} \cdot \pi) $ with the permutation $\sigma_{IIIa, n, k, l}:[n] \rightarrow [n] $  given by
\[
  \sigma_{IIIa,n,k,l}(i)
  =
    \begin{cases}
      k+l-i  & \text{for $i\in \{k, k+1\}$}\\
i -2 & \text{for $k+2\leq i\leq  l$ }\\
      i & \text{for $i< k$ or $i > l$ .}
    \end{cases}
\]

From the definition of type \textit{IIIa}, we have that $\{1,2, \dots, k-1\}$, $\{ k, k+1\}$ and $\{  l+1, \dots, n \}$ are unions of blocks of $ \rho$, therefore 
$ \rho^\prime \in \Int_{1, 2}(n) $.
Also, note that, by construction, $(\rho^\prime,\pi^\prime)$ is of the type \textit{IIIb}.

  Finally, if $ (\rho, \pi)\in\oneconntype{n}{IIIb}$ then by Definition~\ref{lemtwoinnerodd} there are $k<l $ such that the leftmost inner 1-connected block of $ \pi$ starts at $k$, ends at $ l-1$ and $ \{l-1, l\}$ is a brace in $\rho$.  In this case we define $\Phi(\rho, \pi) =(\rho^\prime, \pi^\prime)$ with $\rho^\prime = \sigma_{IIIb, n, k, l} \cdot \rho$ and $\pi^\prime = \sigma_{IIIb, n, k, l} \cdot \pi$ where $ \sigma_{IIIb, n,k, l}:[n]\rightarrow [n]$ is the permutation given by
 \[  \sigma_{IIIb,n,k,l}(i)
 =
    \begin{cases}
      i+2  & \text{for $k\leq i\leq l-2$}\\
      k+l-i    & \text{for $i\in\{l-1, l\}$}\\       
i & \text{for $i < k$ or $i\geq l+1$. }
    \end{cases}
  \]
 Note that, as above $ \rho^\prime \in \Int_{1, 2}(n) $ and that $(\rho^\prime,\pi^\prime)$ of the type \textit{IIIa}.
 Moreover, we have that $\sigma_{IIIa,n,k,l} =\sigma_{IIIb,n,k,l}^{-1}$ so $\Phi$ is an involution also when $\pi$ is type \textit{III}. (See Figure~\ref{fig:exinvolutiontypeIII} below for a specific examples.)

\begin{figure}[h]
  \begin{subfigure}[c]{0.3\textwidth}
    \centering
    \begin{tikzpicture}[scale=0.04]
      \draw (2,0)--(2,10.5);
      \draw (8,0)--(8,4.5);
      \draw (14,0)--(14,4.5);
      \draw (20,0)--(20,10.5);
      \draw (26,0)--(26,10.5);
      \draw (32,0)--(32,7.5);
      
      \draw[style=aline] (38,0)--(38,7.5);
      \draw[style=aline] (44,0)--(44,4.5);
      \draw[style=aline] (50,0)--(50,4.5);
      \draw[style=aline] (56,0)--(56,4.5);
      \draw[style=aline] (62,0)--(62,4.5);
      \draw[style=aline] (68,0)--(68,4.5);
      \draw[style=aline] (74,0)--(74,4.5);
      \draw[style=aline] (80,0)--(80,7.5);
      \draw[style=aline] (44,4.5)--(50,4.5);
      \draw[style=aline] (56,4.5)--(74,4.5);
      \draw[style=aline] (38,7.5)--(80,7.5);

      \draw (86,0)--(86,10.5);
      \draw (92,0)--(92,4.5);
      \draw (98,0)--(98,4.5);
      \draw (104,0)--(104,4.5);
      \draw (110,0)--(110,4.5);
      \draw (116,0)--(116,10.5);
      \draw (8,4.5)--(14,4.5);
      \draw (32,7.5)--(38,7.5);
      \draw (92,4.5)--(110,4.5);
      \draw (2,10.5)--(116,10.5);

      \draw (2,-9)--(2,-4.5);
      \draw (8,-9)--(8,-4.5);
      \draw (14,-9)--(14,-4.5);
      \draw (20,-9)--(20,-4.5);
      \draw[thick] (26,-9)--(26,-4.5);
      \draw[thick] (32,-9)--(32,-4.5);
      \draw (38,-9)--(38,-4.5);
      \draw (44,-9)--(44,-4.5);
      \draw (50,-9)--(50,-4.5);
      \draw (56,-9)--(56,-4.5);
      \draw (62,-9)--(62,-4.5);
      \draw (68,-9)--(68,-4.5);
      \draw (74,-9)--(74,-4.5);
      \draw (80,-9)--(80,-4.5);
      \draw (86,-9)--(86,-4.5);
      \draw (92,-9)--(92,-4.5);
      \draw (98,-9)--(98,-4.5);
      \draw (104,-9)--(104,-4.5);
      \draw (110,-9)--(110,-4.5);
      \draw (116,-9)--(116,-4.5);
      \draw (2,-4.5)--(8,-4.5);
      \draw (14,-4.5)--(20,-4.5);
      \draw[thick] (26,-4.5)--(32,-4.5);
      \draw (38,-4.5)--(44,-4.5);
      \draw (50,-4.5)--(56,-4.5);
      \draw (62,-4.5)--(68,-4.5);
      \draw (74,-4.5)--(80,-4.5);
      \draw (86,-4.5)--(92,-4.5);
      \draw (98,-4.5)--(104,-4.5);
      \draw (110,-4.5)--(116,-4.5);
      
\end{tikzpicture}       
  \end{subfigure}
  $\longleftrightarrow$
  \begin{subfigure}[c]{0.3\textwidth}
    \centering
    \begin{tikzpicture}[scale=0.04]
      \draw (2,0)--(2,10.5);
      \draw (8,0)--(8,4.5);
      \draw (14,0)--(14,4.5);
      \draw (20,0)--(20,10.5);
      
      \draw[style=aline] (26,0)--(26,7.5);
      \draw[style=aline] (32,0)--(32,4.5);
      \draw[style=aline] (38,0)--(38,4.5);
      \draw[style=aline] (44,0)--(44,4.5);
      \draw[style=aline] (50,0)--(50,4.5);
      \draw[style=aline] (56,0)--(56,4.5);
      \draw[style=aline] (62,0)--(62,4.5);
      \draw[style=aline] (68,0)--(68,7.5);
      \draw[style=aline] (26,7.5)--(68,7.5);
      \draw[style=aline] (32,4.5)--(38,4.5);
      \draw[style=aline] (44,4.5)--(62,4.5);
      
      \draw (68,7.5)--(74,7.5);
      
      \draw (74,0)--(74,7.5);
      \draw (80,0)--(80,10.5);
      \draw (86,0)--(86,10.5);
      \draw (92,0)--(92,4.5);
      \draw (98,0)--(98,4.5);
      \draw (104,0)--(104,4.5);
      \draw (110,0)--(110,4.5);
      \draw (116,0)--(116,10.5);
      \draw (8,4.5)--(14,4.5);
      \draw (92,4.5)--(110,4.5);
      \draw (2,10.5)--(116,10.5);

      \draw (2,-9)--(2,-4.5);
      \draw (8,-9)--(8,-4.5);
      \draw (14,-9)--(14,-4.5);
      \draw (20,-9)--(20,-4.5);
      \draw (26,-9)--(26,-4.5);
      \draw (32,-9)--(32,-4.5);
      \draw (38,-9)--(38,-4.5);
      \draw (44,-9)--(44,-4.5);
      \draw (50,-9)--(50,-4.5);
      \draw (56,-9)--(56,-4.5);
      \draw (62,-9)--(62,-4.5);
      \draw (68,-9)--(68,-4.5);
      \draw[thick] (74,-9)--(74,-4.5);
      \draw[thick] (80,-9)--(80,-4.5);
      \draw (86,-9)--(86,-4.5);
      \draw (92,-9)--(92,-4.5);
      \draw (98,-9)--(98,-4.5);
      \draw (104,-9)--(104,-4.5);
      \draw (110,-9)--(110,-4.5);
      \draw (116,-9)--(116,-4.5);
      \draw (2,-4.5)--(8,-4.5);
      \draw (14,-4.5)--(20,-4.5);
      \draw (26,-4.5)--(32,-4.5);
      \draw (38,-4.5)--(44,-4.5);
      \draw (50,-4.5)--(56,-4.5);
      \draw (62,-4.5)--(68,-4.5);
      \draw[thick] (74,-4.5)--(80,-4.5);
      \draw (86,-4.5)--(92,-4.5);
      \draw (98,-4.5)--(104,-4.5);
      \draw (110,-4.5)--(116,-4.5);
      
\end{tikzpicture}       
  \end{subfigure}
  \vskip5ex
  
  \begin{subfigure}[c]{0.3\textwidth}
    \centering
    \begin{tikzpicture}[scale=0.04]
      \draw (2,0)--(2,10.5);
      \draw (8,0)--(8,7.5);
      
      \draw[style=aline] (14,0)--(14,7.5);
      \draw[style=aline] (20,0)--(20,4.5);
      \draw[style=aline] (26,0)--(26,4.5);
      \draw[style=aline] (32,0)--(32,4.5);
      \draw[style=aline] (38,0)--(38,4.5);
      \draw[style=aline] (44,0)--(44,4.5);
      \draw[style=aline] (50,0)--(50,4.5);
      \draw[style=aline] (56,0)--(56,7.5);
      \draw[style=aline] (14,7.5)--(56,7.5);
      \draw[style=aline] (20,4.5)--(26,4.5);
      \draw[style=aline] (32,4.5)--(50,4.5);
      
      \draw (62,0)--(62,10.5);
      \draw (68,0)--(68,10.5);
      \draw (74,0)--(74,7.5);
      \draw (80,0)--(80,4.5);
      \draw (86,0)--(86,4.5);
      \draw (92,0)--(92,7.5);
      \draw (98,0)--(98,7.5);
      \draw (104,0)--(104,4.5);
      \draw (110,0)--(110,4.5);
      \draw (116,0)--(116,10.5);
      
      \draw (8,7.5)--(14,7.5);
      \draw (2,10.5)--(62,10.5);
      
      \draw (80,4.5)--(86,4.5);
      \draw (74,7.5)--(98,7.5);
      \draw (104,4.5)--(110,4.5);
      \draw (68,10.5)--(116,10.5);
      
      \draw[thick] (2,-9)--(2,-4.5);
      \draw[thick] (8,-9)--(8,-4.5);
      \draw (14,-9)--(14,-4.5);
      \draw (20,-9)--(20,-4.5);
      \draw (26,-9)--(26,-4.5);
      \draw (32,-9)--(32,-4.5);
      \draw (38,-9)--(38,-4.5);
      \draw (44,-9)--(44,-4.5);
      \draw (50,-9)--(50,-4.5);
      \draw (56,-9)--(56,-4.5);
      \draw (62,-9)--(62,-4.5);
      \draw (68,-9)--(68,-4.5);
      \draw (74,-9)--(74,-4.5);
      \draw (80,-9)--(80,-4.5);
      \draw (86,-9)--(86,-4.5);
      \draw (92,-9)--(92,-4.5);
      \draw (98,-9)--(98,-4.5);
      \draw (104,-9)--(104,-4.5);
      \draw (110,-9)--(110,-4.5);
      \draw (116,-9)--(116,-4.5);
      \draw[thick] (2,-4.5)--(8,-4.5);
      \draw (14,-4.5)--(20,-4.5);
      \draw (26,-4.5)--(32,-4.5);
      \draw (38,-4.5)--(44,-4.5);
      \draw (50,-4.5)--(56,-4.5);
      \draw (62,-4.5)--(68,-4.5);
      \draw (74,-4.5)--(80,-4.5);
      \draw (86,-4.5)--(92,-4.5);
      \draw (98,-4.5)--(104,-4.5);
      \draw (110,-4.5)--(116,-4.5);
      
\end{tikzpicture}
    
  \end{subfigure}
  $\longleftrightarrow$
  \begin{subfigure}[c]{0.3\textwidth}
    \centering
    \begin{tikzpicture}[scale=0.04]
      \draw[style=aline] (2,0)--(2,7.5);
      \draw[style=aline] (8,0)--(8,4.5);
      \draw[style=aline] (14,0)--(14,4.5);
      \draw[style=aline] (20,0)--(20,4.5);
      \draw[style=aline] (26,0)--(26,4.5);
      \draw[style=aline] (32,0)--(32,4.5);
      \draw[style=aline] (38,0)--(38,4.5);
      \draw[style=aline] (44,0)--(44,7.5);
      \draw[style=aline] (2,7.5)--(44,7.5);
      \draw[style=aline] (8,4.5)--(14,4.5);
      \draw[style=aline] (20,4.5)--(38,4.5);
      
      \draw (50,0)--(50,7.5);
      \draw (56,0)--(56,4.5);
      \draw (62,0)--(62,4.5);
      \draw (68,0)--(68,10.5);
      \draw (74,0)--(74,7.5);
      \draw (80,0)--(80,4.5);
      \draw (86,0)--(86,4.5);
      \draw (92,0)--(92,7.5);
      \draw (98,0)--(98,7.5);
      \draw (104,0)--(104,4.5);
      \draw (110,0)--(110,4.5);
      \draw (116,0)--(116,10.5);
      
      \draw (44,7.5)--(50,7.5);
      \draw (56,4.5)--(62,4.5);
      \draw (80,4.5)--(86,4.5);
      \draw (74,7.5)--(98,7.5);
      \draw (104,4.5)--(110,4.5);
      \draw (68,10.5)--(116,10.5);

      \draw (2,-9)--(2,-4.5);
      \draw (8,-9)--(8,-4.5);
      \draw (14,-9)--(14,-4.5);
      \draw (20,-9)--(20,-4.5);
      \draw (26,-9)--(26,-4.5);
      \draw (32,-9)--(32,-4.5);
      \draw (38,-9)--(38,-4.5);
      \draw (44,-9)--(44,-4.5);
      \draw[thick] (50,-9)--(50,-4.5);
      \draw[thick] (56,-9)--(56,-4.5);
      \draw (62,-9)--(62,-4.5);
      \draw (68,-9)--(68,-4.5);
      \draw (74,-9)--(74,-4.5);
      \draw (80,-9)--(80,-4.5);
      \draw (86,-9)--(86,-4.5);
      \draw (92,-9)--(92,-4.5);
      \draw (98,-9)--(98,-4.5);
      \draw (104,-9)--(104,-4.5);
      \draw (110,-9)--(110,-4.5);
      \draw (116,-9)--(116,-4.5);
      \draw (2,-4.5)--(8,-4.5);
      \draw (14,-4.5)--(20,-4.5);
      \draw (26,-4.5)--(32,-4.5);
      \draw (38,-4.5)--(44,-4.5);
      \draw[thick] (50,-4.5)--(56,-4.5);
      \draw (62,-4.5)--(68,-4.5);
      \draw (74,-4.5)--(80,-4.5);
      \draw (86,-4.5)--(92,-4.5);
      \draw (98,-4.5)--(104,-4.5);
      \draw (110,-4.5)--(116,-4.5);

    \end{tikzpicture}
    
  \end{subfigure}

  \caption{Examples of the involution on $\oneconntype{n}{III}$.} \label{fig:exinvolutiontypeIII}
\end{figure}

\begin{Rem}[Why types I and II are necessary]
\label{rem:why-types-I-II}
It may seem tempting to dispense with types~I and~II and to apply the
type~III operation to the leftmost inner 1-connected block of every
1-connected partition. The following example shows that the resulting map
would not be injective. Put
\[
\begin{aligned}
 \pi_L&=\bigl\{\{1,5,6\},\{2,3,4\}\bigr\},
 &\rho_L&=\bigl\{\{1,2\},\{3\},\{4\},\{5\},\{6\}\bigr\},\\
 \pi_R&=\bigl\{\{1,2,6\},\{3,4,5\}\bigr\},
 &\rho_R&=\bigl\{\{1\},\{2\},\{3\},\{4\},\{5,6\}\bigr\}.
\end{aligned}
\]
In $(\rho_L,\pi_L)$, the block $\{2,3,4\}$ is the unique 1-connected
block and has pivot brace $\{1,2\}$; thus $\pi_L$ is of type~IIa.
Similarly, $\{3,4,5\}$ is the unique 1-connected block of $\pi_R$ and
has pivot brace $\{5,6\}$, so $\pi_R$ is of type~IIb.

Suppose nevertheless that we apply the type~IIIa rule to the first pair and
the type~IIIb rule to the second one. Then
\[
 \bigl(\sigma_{IIIa,6,1,4}\mathbin{\cdot}\rho_L,
       \sigma_{IIIa,6,1,4}\mathbin{\cdot}\pi_L\bigr)
 =(\rho_0,\pi_0)
 =\bigl(\sigma_{IIIb,6,3,6}\mathbin{\cdot}\rho_R,
       \sigma_{IIIb,6,3,6}\mathbin{\cdot}\pi_R\bigr),
\]
where
\[
 \rho_0=\bigl\{\{1\},\{2\},\{3,4\},\{5\},\{6\}\bigr\},
 \qquad
 \pi_0=\bigl\{\{1,2,3\},\{4,5,6\}\bigr\}.
\]

Both blocks of $\pi_0$ are 1-connected and have the common pivot
brace $\{3,4\}$; hence $\pi_0$ is precisely a type~I partition. Thus the
naive extension of the type~III rule sends two distinct type~II boundary
configurations to the same type~I configuration. In particular, it is not
injective and therefore cannot be an involution suitable for the cancellation
argument we make later. Indeed, applying the same rule to the leftmost 1-connected block of
$\pi_0$ gives
\[
 \widetilde\Phi^2(\rho_R,\pi_R)
 =\widetilde\Phi(\rho_0,\pi_0)
 =(\rho_L,\pi_L)\ne(\rho_R,\pi_R).
\]

This explains the separate treatment of types~I and~II. The type~II rule
pairs the two boundary configurations directly,
\[
 \Phi(\rho_L,\pi_L)=(\rho_R,\pi_R),
 \qquad
 \Phi(\rho_R,\pi_R)=(\rho_L,\pi_L),
\]
whereas configurations with two 1-connected blocks sharing a pivot brace are
kept in type~I and treated by the left--right flip prescribed there.
\end{Rem}
  An immediate consequence of the construction of $\Phi$ is the following remark.
\begin{Rem}\label{rem:phi}
    Suppose that $ (\rho, \pi)\in\oneconn{n}$ and  $\sigma:[n] \rightarrow [n]$ the permutation such that $ \Phi(\rho, \pi) = (\sigma\cdot \rho, \sigma\cdot \pi)$. 
    \begin{itemize}[leftmargin=2em]
        \item[1.] If $ \{i, i+1\} $ is a brace of $\rho$ that is not the pivot of $\pi$, then $ \sigma(i+1) = \sigma(i) +1$; if $\{i, i+1\}$ is the pivot of $\pi$, then $\sigma(i+1) =\sigma(i) -1 $; in particular, $ \sigma $ inverses the order of exaclty one brace of $\rho$.
        \item[2.] If  $\{i_1, i_2, \dots, i_q\}$ is a block of $ \pi $ such that $i_1< i_2< \cdots < i_q$ then there are three possibilties: $\sigma(i_1) < \sigma(i_2) < \cdots < \sigma (i_q) $ or $ \sigma(i_2) < \cdots < \sigma(i_q)< \sigma(i_1)$ or $\sigma(i_q) < \sigma(i_1) \cdots < \sigma(i_{q-1})$, so $ \sigma$  on the blocks of $\pi$ is either identity or a circular permutation.
    \end{itemize}
\end{Rem}

	\section{Free Cumulants of Inhomogenous Quadratic Forms}	
\label{sec:cumulants}
\subsection{General case}
  The framework of this section is a non-commutative probability space
  $(\A,\tau)$ and $ X_1, X_2, \dots, X_n $ a family of self-adjoint elements in
  $ \A$. The main result, Proposition~\ref{Prop:45} is a formula that gives a
  combinatorial description of free cumulants of the element
  \begin{equation}
    \label{eq:T}
     T = \sum_{i, j=1}^n a_{i,j} X_iX_j + \sum_{i=1}^n b_i X_i,
  \end{equation}
where $ b_i \in \mathbb{R}$ for each $i $ and  $ A =\big[ a_{i, j}\big]_{i, j=1}^n$ is a self-adjoint skew-symmetric matrix with complex entries.

\begin{defi}Let $n $ be a positive integer.
 \begin{enumerate}[1.,leftmargin=2em]
 \item An $n$-coloring of the set $[m]$ is a function $c:[m]\rightarrow [n]$.
 \item Given a coloring $c:[m]\to[n]$ we consider its \emph{cyclic segmentation},
  which is the  partition  $\sigma\in NC(m)$
  arising from the equivalence relation
  $i\sim_\sigma (i+1) \mod m$ $\iff{}$ $c(i)=c((i+1) \mod m)$.
  We denote this partition by $\seg(c)$.
 \item For $\rho\in \Int_{1, 2}(m)$ and $ c: [m]\rightarrow [n]$ we define
 \[ \Prodab (\rho, c)= \prod_{\substack{\{i<j\}\in \rho }}  a_{c(i), c(j)} \cdot \prod_{\{k\} \in \rho} b_{c(k)}
 \]
 \item For $\pi \in NC(m)$ and $c$ an $n$-coloring of $[m]$, we define
 \[
 \kappa(\pi, c)= \kappa_{\pi}\bigl( X_{c(1)}, X_{c(2)}, \dots, X_{c(m)}   \bigr).
 \]
 \end{enumerate}
\end{defi}
    With these notations
    \[ \kappa_{m}(T) =
    \sum_{k=0}^m 
    \sum_{c: [m+k]\to [n]} 
    \sum_{\substack{\rho\in\Int_{1, 2}(m+k) \\ \rho =\{ B_1, B_2, \dots, B_m\} }}
    \Prodab(\rho, c)
    \,
    \kappa_m \Bigl( \prod_{l\in B_1} X_{c(l)}, \prod_{l\in B_2} X_{c(l)}, \dots, \prod_{l\in B_m} X_{c(l)}\Bigr)
    \]
	where the products are in the natural order. Using the formula for cumulants with products as entries from Theorem~\ref{thm:krawczyk}, with $\rho$ denoting the product degree partition the formula above becomes
   \begin{align}\label{kappa:1}
   \kappa_m(T) &= 
 \sum_{k=0}^m 
 \sum_{c: [m+k]\to [n]}
\sum_{{\substack{\rho\in\Int_{1, 2}(m+k) \\ |\rho| =m  }}} 
 \Prodab(\rho, c) \sum_{\substack{\pi\in\uComp\rho }}
 \kappa(\pi, c). \\ 
                &=
\label{eq:kappam(T)=C1uC2}                  
 \sum_{k=0}^m 
 \sum_{c: [m+k]\to [n]}
\sum_{{\substack{(\rho,\pi)\in\oneconn{m+k}\sqcup\twoconn{m+k} \\ |\rho| =m  }}} 
 \Prodab(\rho, c) \,
 \kappa(\pi, c).
   \end{align}
Next, we show that the contributions of the configurations $(\pi,\rho)\in\oneconn{m+k}$ cancel, for every $m,k$ integers.
\begin{theo}\label{cancellation}
  Suppose that the matrix $A$ is skew-symmetric
  and $ \tau $ is tracial
  (the latter is true in particular when $X_1,X_2,\ldots,X_n$ form a free
  family),  then the cumulants of \eqref{eq:T} are given by
  \begin{align} \label{eqn:thm42}
    \kappa_m(T)
    &= 
 \sum_{k=0}^m 
 \sum_{c: [m+k]\to [n]}
\sum_{{\substack{\rho\in\Int_{1, 2}(m+k) \\ |\rho| =m  }}} 
 \Prodab(\rho, c) \sum_{\substack{\pi\geq \bar \rho }}
    \kappa(\pi, c)\\
\label{eq:kappam(T)=C2}
    &=
 \sum_{k=0}^m 
 \sum_{c: [m+k]\to [n]}
\sum_{{\substack{(\rho,\pi)\in\twoconn{m+k} \\ |\rho| =m  }}} 
 \Prodab(\rho, c) \,
 \kappa(\pi, c).
    \end{align}
\end{theo}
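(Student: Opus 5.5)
The plan is to start from the decomposition \eqref{eq:kappam(T)=C1uC2} and show that, for fixed $m$ and $k$, the sum over the 1-connected configurations vanishes:
\[
\sum_{c:[m+k]\to[n]}\ \sum_{\substack{(\rho,\pi)\in\oneconn{m+k}\\ |\rho|=m}}\Prodab(\rho,c)\,\kappa(\pi,c)=0 .
\]
Once this is established, only $\twoconn{m+k}$ remains, which is \eqref{eq:kappam(T)=C2}. By Lemma~\ref{prop:tylkowieksze}, $\pi\in\CompOdd{\rho}$ is equivalent to $\pi\ge\bar\rho$. Moreover, any $\pi\ge\bar\rho$ automatically satisfies $\pi\vee\rho=\hat1$ by the Remark after Definition~\ref{c2}. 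Together these give \eqref{eqn:thm42}.

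For the cancellation I would use the involution $\Phi$ of Section~\ref{sec:involution}, extended to colored triples. If $\Phi(\rho,\pi)=(\sigma\cdot\rho,\sigma\cdot\pi)$, set $c'=c\circ\sigma^{-1}$ and map $(\rho,\pi,c)\mapsto(\sigma\cdot\rho,\sigma\cdot\pi,c')$. The permutation $\sigma$ depends only on $(\rho,\pi)$ through $k,l$ and the type, and it is read off the same way from the image, since the type is mapped I$\to$I, IIa$\leftrightarrow$IIb, IIIa$\leftrightarrow$IIIb and the relevant parameters correspond. Hence the permutation attached to the image is $\sigma^{-1}$, so this is an involution on triples. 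Also $|\sigma\cdot\rho|=|\rho|=m$, so the map preserves the summation range. It then suffices to show that the weight changes sign:
\[
\Prodab(\sigma\cdot\rho,c')\,\kappa(\sigma\cdot\pi,c')=-\Prodab(\rho,c)\,\kappa(\pi,c).
\]
Note that this identity alone gives the vanishing, even if fixed points existed. In fact there are none: a fixed point would satisfy $w=-w$, i.e.\ $w=0$, which is harmless; and in any case $\sigma$ reverses a brace, so it is never the identity on the configuration.

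The sign of the $\Prodab$ factor comes from Remark~\ref{rem:phi}(1). Every brace $\{i,i+1\}$ of $\rho$ other than the pivot is mapped to $\{\sigma(i),\sigma(i)+1\}$ with the same order of colors, so its factor $a_{c(i),c(i+1)}$ is unchanged. The pivot brace is mapped with its order reversed, so its factor becomes $a_{c(i+1),c(i)}=-a_{c(i),c(i+1)}$ by skew-symmetry. Singletons carry $b_{c(k)}$, which is unchanged. Therefore $\Prodab(\sigma\cdot\rho,c')=-\Prodab(\rho,c)$.

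For the cumulant factor I would use Remark~\ref{rem:phi}(2). On each block of $\pi$, $\sigma$ is order preserving up to a cyclic rotation, so
\[
\kappa_{|B|}\bigl(X_{c(i_1)},\dots,X_{c(i_q)}\bigr)=\kappa_{|B|}\bigl(X_{c(i_2)},\dots,X_{c(i_q)},X_{c(i_1)}\bigr).
\]
This equality holds because free cumulants of a tracial state are invariant under cyclic permutations of their arguments, which follows by Möbius inversion from the cyclic invariance of moments together with the fact that the Kreweras/rotation map preserves $\NC$. Hence $\kappa(\sigma\cdot\pi,c')=\kappa(\pi,c)$, and the two contributions cancel pairwise.

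The main obstacle is justifying Remark~\ref{rem:phi} carefully in every type, in particular that no block of $\pi$ is scrambled beyond a rotation. This is clear for types I and II, where whole padding intervals are swapped and only the blocks through the swap points $1,2,n-1,n$ or $k,k+1$ get rotated. Type III needs more care: I would check that the only block whose relative order changes is the one containing the pivot element's partner (the element $k$ in IIIa). That block either has elements on both sides of $[k,l]$ or has $k$ as its last element before the moved interval, and moving $k$ past $[k+2,l]$ rotates it cyclically because nothing of it lies inside $[k+2,l]$ by the noncrossing property.
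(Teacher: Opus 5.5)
Your proposal is correct and follows the paper's proof: you pair each colored configuration in $\oneconn{m+k}$ with its image under $\Phi$ (with $c\mapsto c\circ\sigma^{-1}$), get the sign flip from skew-symmetry at the reversed pivot brace via Remark~\ref{rem:phi}(1), and get equality of the cumulant factors from traciality plus the cyclic-rotation property of Remark~\ref{rem:phi}(2). Two side claims are wrong but do not affect the argument. First, $\Phi$ does have fixed configurations: for $\rho=\{\{1,2\}\}$, $\pi=\hat 0_2$, the permutation $\sigma_{I,2,1}$ swaps $1$ and $2$ and fixes the configuration, but your sign identity already handles this. Second, in type IIIa the block that gets cyclically rotated is the 1-connected block $B=\{k+1,\dots,l\}$ itself (when $|B|\ge 2$, its first element goes to $l-1$, beyond $\sigma(l)=l-2$), whereas the block containing $k$ keeps its relative order, since it has no points in $[k+1,l]$.
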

\begin{proof}
  Comparing \eqref{eq:kappam(T)=C1uC2}
  with \eqref{eq:kappam(T)=C2}
  we have to show that the contributions of $\oneconn{}$ cancel.
  For this
 it suffices to show that for any $(\rho, \pi) \in \oneconn{m+k}$ and for any $n$-coloring $ c $   of $ [m+k]$ we have that  
\[
     \Prodab(\rho, c)\, \kappa(\pi, c) = - \Prodab(\sigma \cdot \rho, c\circ \sigma^{-1})\, \kappa(\sigma \cdot \pi, c \circ \sigma^{-1} ),
\]

where $ \sigma $ is the permutation on $[m+k]$ which implements the involution $\Phi((\rho, \pi)) = (\sigma \cdot \rho, \sigma \cdot \pi)$ from Section~\ref{sec:involution}.

 Assume that $ (\rho, c) \in \oneconn{m+k} $. Since the matrix $ \big[ a_{i, j} \big]_{i, j =1}^n$ is skew-symmetric, part 1 of  Remark~\ref{rem:phi} gives that 
 $\Prodab(\rho, c) = -\Prodab(\sigma\cdot \rho, c\circ \sigma^{-1})$. 
 Furthermore, the traciality of $ \tau $ and  part 2 of Remark~\ref{rem:phi} gives that 
 $\kappa(\pi, c) = \kappa(\sigma \cdot \pi, c \circ \sigma^{-1}) $,
 and the conclusion follows.
\end{proof}

\begin{cor}
  Observe that
  configurations $(c,\rho)$ with braces $\{i,i+1\}\in\rho$
  such that $c(i)=c(i+1)$ give zero contribution to \eqref{eqn:thm42} because
  in this case $a_{c(i),c(i+1)}=0$.
  The condition $\seg(c)\leq \bar \rho$ excludes such configurations
  and thus we can write
  \begin{equation} \label{eqn:cor43}
    \kappa_m(T) = 
 \sum_{k=0}^m 
\sum_{{\substack{\rho\in\Int_{1, 2}(m+k) \\ |\rho| =m  }}}
\sum_{\substack{c: [m+k]\to [n]\\ \seg(c)\leq \bar \rho}}
 \Prodab(\rho, c) \sum_{\substack{\pi\geq \bar \rho }}
 \kappa(\pi, c).
\end{equation}
\end{cor}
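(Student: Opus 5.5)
The plan is to start from formula \eqref{eqn:thm42} of Theorem~\ref{cancellation} and to show that every term it contains but \eqref{eqn:cor43} omits is zero. The two formulas have the same summands; \eqref{eqn:cor43} only restricts the colorings to those with $\seg(c)\leq\bar\rho$, after exchanging the sums over $c$ and $\rho$, which is harmless for finite sums. So it suffices to prove the following: if $\rho\in\Int_{1,2}(m+k)$ and $c$ is a coloring with $\seg(c)\not\leq\bar\rho$, then $\Prodab(\rho,c)=0$. For this I will show that such a pair $(\rho,c)$ has a brace $\{i,i+1\}\in\rho$ with $c(i)=c(i+1)$. The factor $a_{c(i),c(i+1)}=a_{c(i),c(i)}$ then occurs in $\Prodab(\rho,c)$, and it vanishes because skew-symmetry forces the diagonal of $A$ to be zero.

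The key step is the contrapositive: if $c(i)\neq c(i+1)$ for every brace $\{i,i+1\}$ of $\rho$, then $\seg(c)\leq\bar\rho$. I split according to the number $q$ of braces in $\rho$.

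\emph{Case $q\leq 1$.} Here $\bar\rho=\hat 1_{m+k}$ by Definition~\ref{c2}, so $\seg(c)\leq\bar\rho$ holds trivially and nothing needs checking.

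\emph{Case $q\geq 2$.} The blocks of $\bar\rho$ are the cyclic intervals $\{p_j+1,\dots,p_{j+1}\}$, together with the wrap-around block $\{p_q+1,\dots,m+k\}\cup\{1,\dots,p_1\}$. Hence two cyclically consecutive positions $i$ and $i+1 \pmod{m+k}$ lie in different blocks of $\bar\rho$ exactly when $\{i,i+1\}$ is one of the braces $\{p_j,p_j+1\}$. In particular the pair $(m+k,1)$ always lies in the outer block. The blocks of $\seg(c)$ are generated by the relations $i\sim i+1 \pmod{m+k}$ with $c(i)=c(i+1)$. Under our assumption none of these relations crosses a brace, so each generating relation joins two points of the same block of $\bar\rho$. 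It follows that every block of $\seg(c)$ is contained in a block of $\bar\rho$, i.e. $\seg(c)\leq\bar\rho$.

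Combining the two cases, every coloring dropped in passing from \eqref{eqn:thm42} to \eqref{eqn:cor43} carries a monochromatic brace and therefore contributes zero. This gives \eqref{eqn:cor43}.

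The only point needing care is the converse direction of the corollary's wording, namely that the condition excludes \emph{all} such configurations. When $q=1$ it does not: a monochromatic single brace is still allowed by $\seg(c)\leq\bar\rho=\hat 1$. Those terms vanish anyway, however, so the stated identity is unaffected; this is harmless but should be mentioned. For $q\geq 2$ the argument above runs in both directions, so the condition is exactly equivalent to the absence of monochromatic braces.
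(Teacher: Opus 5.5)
Your proof is correct and follows the paper's own brief justification: terms with a monochromatic brace vanish because the skew-symmetric $A$ has zero diagonal, and restricting to $\seg(c)\leq\bar\rho$ discards only such terms. You prove the direction the identity actually needs, namely that $\seg(c)\not\leq\bar\rho$ forces a monochromatic brace, and your caveat is accurate: for a single brace, $\bar\rho=\hat 1$, so the condition does not exclude monochromatic braces, though those terms vanish anyway.
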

\subsection{Case of free variables}
In this subsection we assume that $X_1,X_2,\ldots,X_n$ are free. In this case mixed free cumulants vanish, which puts additional
restrictions on the configurations appearing in \eqref{eqn:cor43}.
We note that the colorings in \eqref{eqn:cor43} have fixed segmentation and $\pi$ must be a so-called colored noncrossing partition.
\begin{defi}\label{def:admi}
     Fix a coloring $c:[m]\to[n]$. A partition $\pi\in NC(m)$ is called \emph{colored noncrossing partition}, denoted $\pi\in NC(n,c)$ if $c$ is constant on the blocks of $\pi$. That is of $i$ and $j$ are in the same block of $\pi$ then $c(i)=c(j)$.  We say that a block has the color of its elements.
     \end{defi}
\begin{prop}\label{Prop:45}
    For a fixed $\rho\in\Int_{1,2}$ the colorings $c:[m+k]\to[n]$ which contribute to the sum \eqref{eqn:cor45} are those satisfying $\seg(c)=\bar \rho$. Thus
    \begin{equation}
    \label{eqn:cor45}    \kappa_m(T)= 
 \sum_{k=0}^m 
\sum_{{\substack{\rho\in\Int_{1, 2}(m+k) \\ |\rho| =m  }}}
\sum_{\substack{c: [m+k]\to [n]\\ \seg(c)= \bar \rho}}
 \Prodab(\rho, c) \sum_{\substack{\pi\geq \bar \rho\\\pi \in NC(m+k,c) }}
 \kappa(\pi, c).
 \end{equation}

\end{prop}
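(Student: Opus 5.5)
Starting from \eqref{eqn:cor43}, I fix $k$, $\rho\in\Int_{1,2}(m+k)$ with $|\rho|=m$, and a coloring $c$ with $\seg(c)\le\bar\rho$ and $\Prodab(\rho,c)\ne0$. First I reduce the inner sum over $\pi\ge\bar\rho$ to colored noncrossing partitions. Since the $X_i$ are free, mixed cumulants vanish, so $\kappa(\pi,c)=\prod_{B\in\pi}\kappa_{|B|}(X_{c(i)}:i\in B)$ is zero unless $c$ is constant on every block of $\pi$. Hence only $\pi\in NC(m+k,c)$ survive.

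It remains to show that the terms with $\seg(c)<\bar\rho$ strictly contribute nothing. Suppose some $\pi\ge\bar\rho$ lies in $NC(m+k,c)$. Each block of $\bar\rho$ is contained in a block of $\pi$, on which $c$ is constant, so $c$ is constant on every block of $\bar\rho$. I then want to conclude that $\bar\rho\le\seg(c)$. The inner blocks $\{p_j+1,\dots,p_{j+1}\}$ of $\bar\rho$ are intervals, so constancy on them places each inside one cyclic segment of $c$. The outer block $\{p_q+1,\dots,m+k\}\cup\{1,\dots,p_1\}$ is a cyclic interval, because $m+k$ and $1$ are cyclically adjacent. Constancy of $c$ on it therefore also puts it inside one cyclic segment, and this is exactly why the cyclic segmentation was used. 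The degenerate case, with at most one brace and $\bar\rho=\hat 1$, is handled the same way: $c$ is constant, so $\seg(c)=\hat 1$.

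Combining $\bar\rho\le\seg(c)$ with the standing restriction $\seg(c)\le\bar\rho$ gives $\seg(c)=\bar\rho$. So every coloring with $\seg(c)\ne\bar\rho$ has an empty inner sum over $NC(m+k,c)$ and contributes zero, and \eqref{eqn:cor45} follows.

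The main point to check is the step \emph{$c$ constant on the blocks of $\bar\rho$ implies $\bar\rho\le\seg(c)$}, in particular for the wrap-around outer block. One subtlety is that $\seg(c)$ need not be noncrossing-relevant beyond being a cyclic interval partition. The other is to make sure that the meaning of ``$\le$'' for $\seg(c)$ in the corollary matches the refinement order being used here. I would check this by direct comparison of consecutive indices, including the pair $(m+k,1)$.
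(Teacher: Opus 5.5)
Your proof is correct and follows essentially the same route as the paper's: freeness forces $\pi$ to be colored, $\pi\ge\bar\rho$ then forces $c$ to be constant on the blocks of $\bar\rho$ (so $\bar\rho\le\seg(c)$, the wrap-around outer block being handled by the cyclic segmentation), and combining this with the restriction $\seg(c)\le\bar\rho$ inherited from \eqref{eqn:cor43} gives $\seg(c)=\bar\rho$. You spell out the final step, the comparison of the two inequalities, which the paper leaves implicit in the phrase ``colors can differ only on consecutive elements which form braces of $\rho$.''
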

\begin{proof}
  The freeness assumption implies  that $\pi$ is colored
  and since $\pi\geq \bar \rho$,
  the coloring must be constant on the blocks of
  $\bar \rho$ as well. Thus the colors can differ only on consecutive elements
  which form braces of $\rho$.
\end{proof}

\section{Operator model}

Let $(X_i)_{i\geq 1}$ be a sequence of free random variables with compound free Poisson distributions. Specifically, assume that for each $1\leq i \leq n$, the free cumulants of $X_i$ are given by
\begin{align*}
    \kappa_m(X_i) = \int_{\mathbb{R}} t^m \, d\sigma_i(t) \quad \text{for all } m \geq 1,
\end{align*}
where $\sigma_i$ is a positive compactly supported measure on $\mathbb{R}$.

For each $i \in \{1,2,\ldots,n\}$, let $H_i = L^2(\mathbb{R}, \sigma_i)$,
put $\xi_i=\mathbb{1}_{H_i}$  the constant function
and $\mu_i \in H_i$   the identity function $\mu_i(t)=t$.
Denote by $M_i=M_{\mu_i}$  the multiplication operator on $H_i$ defined by
\begin{align*}
    (M_i f)(t) = t f(t).
\end{align*}

Put $H = \bigoplus_{i=1}^n H_i$ and define the Fock space (without vacuum) as
$\mathcal{H} = \bigoplus_{k=1}^\infty H^{\otimes k}$.
We will tacitly
identify the subspaces $H_i\subseteq H\subseteq\mathcal{H}$ with their respective embeddings.
Denote by $P_i$  the orthogonal projection from $H$ onto $H_i$
and by $P_i^\perp = I - P_i$ 
the projection onto the orthogonal complement $H_i^\perp\subseteq H$.

We define the following operators acting on elementary tensors $v = v_1 \otimes v_2 \otimes \cdots \otimes v_k \in \mathcal{H}$. For $1 \leq k \leq n$, let:
\begin{align*}
    B_i(v) &= M_i(P_i v_1) \otimes v_2 \otimes \cdots \otimes v_k, \\
  A_i^*(v) &= \mu_i \otimes
v_1 \otimes v_2 \otimes \cdots \otimes v_k, \\
  A_i(v) &=\begin{cases}
      0 &\text{ if } k=1,\\
      \langle v_1, \mu_i \rangle\,
v_2 \otimes v_3 \otimes \cdots \otimes v_k &\text{ if } k\geq 2.
  \end{cases} 
\end{align*}
These operators are extended to $\mathcal{H}$ by linearity and continuity.

We define the operator $V$ on $\mathcal{H}$ by:
\begin{align*}
    V = \sum_{k=1}^n b_k B_k + \sum_{\substack{1\leq i,j\leq n \\ i\neq
  j}}a_{i,j} \left( B_i A_j + A_i^* B_j + A_i^* A_j \right).
\end{align*}

Finally, define the vector $\xi = \xi_1 \oplus \xi_2 \oplus \cdots \oplus \xi_n \in H \subset \mathcal{H}$.

\begin{theo}\label{twr:niskonczonapodzielnoscEL}
    For each $m \geq 1$, we have
    \[
        \kappa_m(T) = \langle V^m \xi, \xi \rangle.
    \]
\end{theo}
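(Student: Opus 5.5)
The plan is to expand $\langle V^m\xi,\xi\rangle$ as a sum over words in the elementary operators and to match these words bijectively, with the same weights, to the triples $(\rho,c,\pi)$ in formula \eqref{eqn:cor45} of Proposition~\ref{Prop:45}. Since $A$ is skew-symmetric, $a_{i,i}=0$, so restricting the sum in $V$ to $i\neq j$ loses nothing. Expanding $V^m$ amounts to choosing, for each of the $m$ factors, either a term $b_kB_k$ or a brace term $a_{i,j}(\cdots)$.
\begin{itemize}
\item A term $b_kB_k$ is read as a singleton $\{p\}\in\rho$ with $c(p)=k$.
\item A term $a_{i,j}(\cdots)$ is read as a brace $\{p,p+1\}\in\rho$ with $c(p)=i$, $c(p+1)=j$.
\end{itemize}
A word of length $m$ in these choices thus determines an interval partition $\rho\in\Int_{1,2}(m+k)$ with $|\rho|=m$ (here $k$ is the number of braces) together with a coloring $c$, and the scalar prefactor is exactly $\Prodab(\rho,c)$. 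Operators act right to left, so the rightmost factor corresponds to position $1$. I will fix this reading convention once, and if necessary pass to $\langle \xi, (V^*)^m\xi\rangle$, using that $A$ is self-adjoint and the $b_k$ are real, so that the left-to-right order of positions matches the order of the cumulant arguments.

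Next I interpret the tensor degree of the current vector as a stack of open blocks of $\pi$, with the first tensor factor $v_1$ being the currently open (innermost) block; its color is recorded by the summand $H_i$ in which $v_1$ lies. The base level $\xi=\oplus\xi_i$ is the outer block.
\begin{itemize}
\item $B_i$ means that the position (of color $i$) is added to the current block. The projection $P_i$ forces the current block to have color $i$, and $M_i$ contributes one factor $t$.
\item $A_i^*$ opens a new nested block of color $i$ at this position, contributing the vector $\mu_i$, i.e.\ one factor $t$.
\item $A_j$ closes the current block at this position via $\langle v_1,\mu_j\rangle$, contributing a last factor $t$ and integrating against $\sigma_j$.
\end{itemize}
Hence a block of color $i$ and size $s$ produces exactly $\int t^s\,d\sigma_i(t)=\kappa_s(X_i)$. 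For the outer block the pairing with $\xi$ plays the role of the integration. Since the $X_i$ are free, the product of these factors over the blocks is $\kappa(\pi,c)$.

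For a brace $\{p,p+1\}$ with colors $i\neq j$, the element $p$ either stays in its block or closes it, and $p+1$ either opens a new block or returns to the block below. These four combinations correspond to the operator products in $V$ with one exception: ``stay then return'' would put positions of different colors in the same block and is killed by the projections. The three surviving products $B_iA_j$, $A_i^*B_j$, $A_i^*A_j$ are exactly the ones appearing in $V$, up to matching each to the correct combination under the fixed reading order. Singletons of $\rho$ only act by $B_k$, so every segment of $\bar\rho$ lies inside a single block; together with the colors this gives $\pi\geq\bar\rho$ and $\seg(c)=\bar\rho$. Finally, $A_j$ annihilates level-one vectors, so the outer block is never closed. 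The vector stays in the base level $H$ at the beginning and at the end, which forces the first and last segments into one block, matching the cyclic outer block of $\bar\rho$.

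The main obstacle is proving that this correspondence is a genuine bijection between nonzero words and the admissible triples $(\rho,c,\pi)$, with $\pi\in NC(m+k,c)$, $\pi\geq\bar\rho$ and $\seg(c)=\bar\rho$. The stack discipline of the Fock space gives noncrossingness, and conversely every such $\pi$ is read off uniquely by scanning left to right and pushing and popping blocks. The delicate points are the following:
\begin{itemize}
\item verifying that Lemma~\ref{prop:tylkowieksze} (every non-outer block is $2$-connected) is exactly what makes each block start right after a brace and end right before one, so that every opening and closing occurs at a brace;
\item checking the boundary behavior at positions $1$ and $m+k$, where the outer block wraps around;
\item confirming that no word is double counted by the three brace terms.
\end{itemize}
Once the bijection and the weights are checked, summing over all words reproduces \eqref{eqn:cor45}, and Theorem~\ref{cancellation} together with Proposition~\ref{Prop:45} yields $\kappa_m(T)=\langle V^m\xi,\xi\rangle$.
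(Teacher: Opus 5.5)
Your proposal is correct and follows essentially the paper's route. You expand $V^m$ over the alphabet $\mathbb{T}$, read $\rho$ and $c$ off the factorization, and use the Fock-space stack discipline to obtain $\pi\in NC(m+k,c)$ with $\pi\geq\bar\rho$ and block weights $\kappa(\pi,c)$; the paper does the same but peels off innermost subwords $A_jB_j^{q-p-1}A_j^*$ by induction instead of using a stack, and both then match \eqref{eqn:cor45}.

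The one wrinkle is your orientation. With position $1$ the rightmost factor, the term $a_{i,j}B_iA_j$ colors $(p,p+1)$ by $(j,i)$, so you obtain $\overline{\Prodab(\rho,c)}$ rather than $\Prodab(\rho,c)$. This is harmless: $a_{j,i}=\overline{a_{i,j}}$, and all $\kappa(\pi,c)$ and $\kappa_m(T)$ are real. The paper avoids the issue by reading left to right, so that $A_j$ opens and $A_j^*$ closes a block.
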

\begin{proof}
We show that the expansion of $\langle V^m\xi,\xi\rangle$ reproduces exactly
the formula \eqref{eqn:cor45}, by establishing a bijection between the non-zero terms in both expansions.
To this end we will expand the support of $V^m$ into 
 two alphabets:
\begin{align*}
  \mathbb{S}&=\bigcup_{j=1}^n\{B_j,A_j,A_j^*\}\\
  \mathbb{T}&=\bigcup_{1\leq i\neq j\leq n}\{B_i,B_iA_j,A_i^*B_j,A_i^* A_j\}\}
               .
\end{align*}

We denote by $\mathcal{W}_m$ the set of words of length $m$ over the alphabet
$\mathbb{T}$, i.e.
\[
  \mathcal{W}_m=\Big\{T_1T_2 \dotsm T_m \mid T_i\in\mathbb{T} \Big\}
  .
\] 

Observe that formally, as long as we do not simplify products, every $w\in\mathcal{W}_m$
has a unique factorization into $m$ elements from $\mathbb{T}$.

We shall establish a bijection between the set of words $w\in\mathcal{W}_m$ with $\langle w \xi,\xi\rangle\neq 0$
and the  tuples $(k,\rho,c,\pi)$ appearing in \eqref{eqn:cor45}.

Pick a word $w\in \mathcal{W}_m$ with non-zero contribution.n
It has a unique factorization $w=S_1S_2\dotsm S_{m+k}$ with $S_i\in
\mathbb{S}$ for every $1\leq i\leq m +k$, and so $k$ is uniquely determined.

The coloring $c:[m+k]\to[n]$ is given by the colors of the letters induced by
their indices, that is,
for $1\leq i\leq m+k$  put $c(i)=k$ if $S_i\in\{B_k,A_k,A_k^*\}$.

The Fibonacci partition $\rho$ can be recovered from the unique factorization
of $w$ into alphabet $\mathbb{T}$: each  $B_iA_j,A_i^*B_j$ and ${A_i^*A_j}$
corresponds to a brace and the remaining points are singletons.

It remains to show that  that $\bar \rho=\seg(c)$
and to construct $\pi\in NC(m+k,c)$ such that $\pi\geq\bar\rho$.

\noindent
\textbf{Claim 1:} $w$ starts and ends in $B_i$.
Indeed assume otherwise that $w=A_j^*w'$, then $\langle w\xi,\xi\rangle= \langle w'\xi,A_j\xi\rangle=0$. Similarly if $w$ does not end in $B_i$ then it ends in some $A_j$, but $A_j\xi=0$.  

\noindent
\textbf{Claim 2:} $w$ is a \emph{Motzkin word}, that is,
it contains the same number of creation and annihilation operators
and in each initial segment $S_1S_2\dotsm S_i$ of $w$  the number of creation
operators $A_j^*$ does not exceed the number of annihilation operators  $A_j$.

Let $r$ be the total number of creation and annihilation operators appearing
in $w$ with $k\in\{1,2,\ldots,n\}$.
Since creation operators increase the length of tensors by one and annihilation operators decrease it by one,
a necessary condition for $\langle w\xi,\xi\rangle\neq0$ is that $r$ is even
and there is the same number of creation and annihilation operators in $w$.
If the second condition is violated for a segment $S_1S_2\dotsm S_i$ then
\begin{align*}
  \langle w\xi,\xi\rangle
  &= \langle S_1S_2\dotsm S_{m+k}\xi,\xi\rangle\\
  &= \langle S_{i+1}S_{i+2}\dotsm S_{m+k}\xi,S_i^*\dotsm S_2^*S_1^*\xi\rangle
\end{align*}
and $S_i^*\dotsm S_2^*S_1^*\xi=0$.

\noindent
{\bf Claim 3:} $w$ corresponds to  an  irreducible noncrossing partition $\pi_w\in NC(m+k,c)$ without singletons.

Let $w=S_1S_2\dotsm S_{m+k}$ be a word appearing in the expansion of $V^m$, where $S_i\in\mathbb{S}$.
We proceed by the induction on the total number $r$  of creation and
annihilation operators $A_j$ and $A_j^*$ with $1\leq j \leq n$. 

\noindent
\textbf{Base case $r=0$.}
In this case $w$ consists only of operators $B_i$.
Since $B_i\xi = M_i\xi_i\in H_i$ and $H_i\perp H_j$ for $i\neq j$,
a nonzero contribution occurs if and only if all indices coincide.
In this case $w$ corresponds to the unique irreducible partition with one block.

\medskip

\noindent
\textbf{Induction step.}
Suppose that $w$ contains exactly  $r\ge2$ creation and annihilation operators
and assume that the assertion holds for all words with fewer than $r$ such
operators. 

Scan the word from left to right and let $q$ be the smallest index such that
$S_q=A_j^*$ for some $j$. Since $w$ is a Motzkin word there exists an
annihilation operator to the left of $S_q$. Let $p<q$ be the largest index $p$
such that $S_p=A_i$ for some $i$.
By the choice of $p$ and $q$, each operator between $p$ and $q$ is of the form $B_l$ for some $l$.
We claim that $i=j$ and all operators between $p$ and $q$ are be equal to $B_j$.
Indeed, 
 if $B_l$ with $l\neq j$ appears between $p$ and $q$, then $B_l$ acts on a tensor
whose first component lies in $H_j$, hence vanishes.
Therefore only $B_j$ may occur between $A_j^*$ and $A_i$.

Similarly if $i\neq j$, then $A_i$ acts on a tensor whose first component is
orthogonal to $H_i$, therefore
$\langle w\xi,\xi\rangle=0$.
Thus $i=j$.

Consequently, the subword
\[
A_j\,B_j^{\,q-p-1}\,A_j^*.
\]
gives rise to an interval block $\{p,\ldots,q\}\in\pi_w$ of size at least two, whose maximal element corresponds to
a creation operator and whose minimal element corresponds to an annihilation
operator. Observe that all elements of this block have the same color $j$.

After removing the above subword we are left with a shorter Motzkin word $w'$ with $r-2$ creation and
annihilation operators. 
By the induction hypothesis the Motzkin word $w'$ corresponds to a noncrossing
irreducible partition without singletons. Inserting the interval block
constructed
above produces a noncrossing partition with the same properties.

Thus $\pi_w$ is constructed. Observe that by the definition of the alphabet
$\mathbb{T}$ for each inner block $\{p,\ldots,q\}$ constructed above we have
$S_{p-1}\in \{B_l,A_l^*\}$ with $l\neq j$ similarly $S_{q+1}\in\{B_l, A_l\}$
with $l\neq j$. Moreover the outer block of $\pi_w$ contains the outer block of
$\bar \rho$, as from the construction, the outer block of $\pi$ contains all
$B_i$ at the beggining and end of $w$.
Thus $\pi\geq \bar \rho$.

Given $(k,\rho,c,\pi)$ it is clear how to define the corresponding $w\in \mathcal{W}_m$.

\noindent
\textbf{Evaluation of the blocks.}
Let $\{p,\ldots,q\}$ be the inner most block of $\pi_w$, we have
\[
\langle w\xi,\xi\rangle
=
\kappa_{q-p+1}(X_j)\,\langle w'\xi,\xi\rangle,
\]
hence recursively $\langle w\xi,\xi\rangle=\kappa(\pi,c).$
\medskip

In order to obtain the coefficient $\alpha(\pi,c)$ observe that every $w\in\mathcal{W}_m$ can be decomposed in a unique way as a product of of the alphabet $\mathbb{T}$. For example if  $w=w_1B_i A_j w_2$ then this $B_i A_j$ cannot be generated otherwise as from the building block $B_i A_j$, as otherwise $A_j$ is multiplied form the left by $A_j^*$. The same applies to all pairs of neighbouring elements of the form $\{B_iA_j,A_j^*B_i,A_i^* A_j\}$, after this identification we are left only with $B_k$. For example 
\[w=B_i B_i A_j A_j^* A_k B_k B_k B_k A_k^* B_i B_i=B_i| B_i A_j| A_j^* A_k| B_k| B_k| B_k| A_k^* B_i| B_i. \]
Each pair from $\{B_iA_j,A_j^*B_i,A_i^* A_j\}$ corresponds to the coefficient $a_{ij}$ and each $B_j$ corresponds to $b_j$. It is clear that the coefficient of $w$ in the exansion of $V^m$ is $\alpha(\pi_w,c)$.

\end{proof}

\section{Characterization of quadratic polynomials preserving free infinite divisibility}

\begin{theo} \label{prop:znikaniekumulant}
  The following properties are equivalent for a quadratic form
  $$T (X_1, X_2, \dots, X_N) =\sum_{i,j=1}^N a_{i,j}\X_i\X_j+\sum_{i =1}^N b_iX_i$$
  with selfadjoint system matrix   $A=[a_{i,j}]_{i,j=1}^N\in M_N(\IC)$, $b_i\in\R$. 
\begin{enumerate}
 \item[$(i)$] 
  $T$ preserves free infinite divisibility.
 \item[$(ii)$]
  $A$ is skew symmetric.
\end{enumerate}
\end{theo}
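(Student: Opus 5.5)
The plan is to prove the two implications separately.

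\emph{(ii)$\Rightarrow$(i).} I would first reduce to compound free Poisson inputs. Every freely infinitely divisible law is a weak limit of compound free Poisson laws with compactly supported L\'evy measures. Moments of the polynomial $T$ depend continuously on the joint moments of the free family, and free infinite divisibility is closed under weak limits. I would therefore assume $X_1,\dots,X_N$ are free compound free Poisson variables with measures $\sigma_i$. If one wants to avoid moment-convergence issues for non-compactly supported limits, one can approximate in distribution and use continuity of $T$ evaluated in free variables with respect to convergence in distribution (e.g.\ via operator-valued/strong convergence or truncation). In this setting Theorem~\ref{twr:niskonczonapodzielnoscEL}, which relies on Theorem~\ref{cancellation} and Proposition~\ref{Prop:45}, gives $\kappa_m(T)=\langle V^m\xi,\xi\rangle$ for all $m\ge1$.

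The next step is to check that $V$ is self-adjoint (and bounded, since the $\sigma_i$ are compactly supported). We have $B_i^*=B_i$, because $M_i$ is self-adjoint and $P_i$ is a projection acting on the first factor, and $A_i^*$ is the adjoint of $A_i$. Hence
\[
V^*=\sum_k b_kB_k+\sum_{i\ne j}\bar a_{i,j}\bigl(A_j^*B_i+B_jA_i+A_j^*A_i\bigr).
\]
Relabelling $i\leftrightarrow j$ and using $\bar a_{j,i}=a_{i,j}$ (self-adjointness of $A$) together with $b_k\in\R$ returns $V$. By the spectral theorem, $\langle V^m\xi,\xi\rangle=\int t^m\,d\nu(t)$, where $\nu$ is the spectral measure of $V$ at $\xi$, a positive compactly supported measure. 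So $\kappa_m(T)$ is the moment sequence of a positive measure, i.e.\ $T$ is compound free Poisson and hence freely infinitely divisible. Passing to the limit gives (i).

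\emph{(i)$\Rightarrow$(ii).} Write $A=S+K$ with $S$ the symmetric part. Since $A$ is self-adjoint, $S$ is real symmetric and $K$ is skew. Suppose $S\neq0$. I would test $T$ on free semicircular variables (or on scaled ones, $X_i=t_iY_i$), which are freely infinitely divisible. Because the linear part and $K$ are perturbations, the strategy is to compare with the homogeneous result of \cite{EjsmontLehner:2019:commutators}. There the homogeneous form $\sum a_{i,j}X_iX_j$ with nonzero symmetric part was shown to fail infinite divisibility, via a specific cumulant/Hankel-determinant violation of conditional positive definiteness for suitable semicircular input.

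To kill the linear part, I would use scaling: replace $X_i$ by $\lambda X_i$. Then $T_\lambda=\lambda^2(Q+\lambda^{-1}L)$, where $Q$ is the quadratic part and $L$ the linear part. Free infinite divisibility is invariant under dilation and closed under weak limits, and $Q+\lambda^{-1}L\to Q$ in distribution as $\lambda\to\infty$. Hence if $T$ preserves free infinite divisibility, so does $Q$. The result of \cite{EjsmontLehner:2019:commutators} then forces $A$ to be skew symmetric.

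The main obstacle is the approximation step in (ii)$\Rightarrow$(i), namely justifying that free infinite divisibility of $T$ passes from compound Poisson inputs to general freely infinitely divisible inputs with possibly unbounded support. For this I would rely on the continuity of free polynomial evaluation in distribution (affiliated operators) together with the closedness of the class of freely infinitely divisible laws under weak convergence.
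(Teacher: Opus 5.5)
Your proof is correct. For (ii)$\Rightarrow$(i) you take the paper's route: free compound free Poisson inputs, the Fock space identity $\kappa_m(T)=\langle V^m\xi,\xi\rangle$, then density. You also make explicit two points the paper leaves implicit. First, $V=V^*$, which uses only $A=A^*$ and $b_k\in\mathbb{R}$, so $\kappa_m(T)$ is the moment sequence of the spectral measure of $V$ at $\xi$. Second, you explain how to pass to general freely infinitely divisible inputs.

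For (i)$\Rightarrow$(ii) your route genuinely differs. The paper removes the linear part by \emph{translations} $X\mapsto X+tI$, in two steps.
\begin{enumerate}
\item[(a)] It first treats a nonzero diagonal entry on its own. Completing the square gives $a_{i,i}(S+\gamma I)^2+\mathrm{const}$, and it cites that $(S+\gamma I)^2$ is not freely infinitely divisible for $\gamma\neq 0$.
\item[(b)] For $i\neq j$ it restricts to two variables. It chooses shifts $t_1,t_2$, dividing by $a_{i,j}+a_{j,i}$, so that only $a_{i,j}XY+a_{j,i}YX$ plus a constant survives. The homogeneous theorem of Ejsmont--Lehner then applies.
\end{enumerate}

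You instead use \emph{dilations} $X\mapsto \lambda X$, together with invariance of free infinite divisibility under dilation and its closedness under limits. This removes the linear part in one step. It also lets you apply the homogeneous characterization, which already covers the diagonal, to the whole matrix $A$ at once. You need no case distinction and no appeal to the result on $(S+\gamma I)^2$.

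The price is a limiting argument instead of an exact algebraic identity. Your counterexamples are therefore only of the form $T(\lambda X)$ with $\lambda$ large, using that the non-infinitely-divisible laws form an open set. The cleanest justification of the limit is at the level of cumulants. $\kappa_m(Q+\lambda^{-1}L)$ is a polynomial in $\lambda^{-1}$, and conditional positive definiteness of the cumulant sequence is a closed condition. This sidesteps any weak-convergence issue for unbounded inputs.

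Your opening remark that $K$ and the linear part are ``perturbations'' plays no role. The scaling argument alone does the job.
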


\begin{proof}

 $(ii)\implies (i)$ follows directly from Proposition~\ref{Prop:45}, Theorem~\ref{twr:niskonczonapodzielnoscEL} and the density of compound free Poisson variables (see Section~\ref{prelim:1}).

 We shall show the implication $ (i) \implies (ii) $ through counterexamples.

   First, assume that $ a_{ii} \neq 0$ for some $ i \in [ N]$. Let $ X_i = \beta I+ S$  where $\displaystyle \beta\in \mathbb{R}, \beta \neq - \frac{b_i}{2a_{i, i}} $ and $ S $ is a standard semicircular element (mean 0 and variance 1) and let $ X_j = 0$ for $ j \neq i $. Then 
   \[
  T = a_{i, i} (S+ \gamma I)^2 - \frac{b_i^2}{4 a_{i,i}} 
I,
    \]
   with $ \displaystyle \gamma = \beta + \frac{b_i}{2 a_{i, i}}$.
   But $( S + \gamma I)^2 $ is not freely infinitely divisible when $ \gamma \neq 0$ (see \cite{Eisenbaum:2012}), so the diagonal entries vanish.

    Next, fix $ i, j \in [N], i\neq j $ and let $ X, Y $ be two free self-adjoint free infinitely divisible variables. The shifted variables $ X^\prime = X + t_1I$ and $Y^\prime = Y + t_2I$ are freely infinitely divisible as well, hence so is the quadratic form obtained by evaluating $ T $ for $ X_i = X$, $ X_j = Y $ and $ X_k = 0$ whenever $ k \notin \{i, j\}$, i.e.
    \[
     T = a_{i,j}(X + t_1I) (Y + t_2 I) + a_{j,i}(Y + T_2I) (X + t_1I) + b_i(X + t_1I) + b_j (Y + T_2I).
    \]
   After rearrangement we have
  \begin{align*}
      T&=a_{i,j} X Y+ a_{j,i} Y X +((a_{i,j}+a_{j,i})t_2+b_i)X+((a_{i,j}+a_{j,i})t_1+b_j)Y\\
      &+(a_{i,j}+a_{j,i})t_1t_2+b_it_1+b_jt_2.
  \end{align*}
   
    If $ a_{i, j} \neq - a_{j, i}$, then for 
    $ \displaystyle t_1 = -\frac{b_j}{a_{i, j}+ a_{j, i} }$
    and
    $ \displaystyle t_2 = -\frac{b_i}{a_{i, j}+ a_{j, i} }$
    the 
    reparametrized quadratic form preserves free infinite divisibility if and only if $a_{i,j} X Y+a_{j,i} Y X $ preserves free infinite divisibility, but from \cite{EjsmontLehner:2019:commutators} we know that this implies $a_{i,j}=-a_{j,i}$, which gives a contradiction.
        
\end{proof}

\begin{Rem}\label{computer}
    The condition that the matrix $\big[ a_{i, j}\big]_{i, j=1}^N$ is self-adjoint and skew-symmetric is equivalent to $ T $ being of the form 
    \[
     T(X_1,X_2, \dots, X_N) = \sum_{1 \leq k < l \leq N } \beta_{k, l} \cdot i[X_k, X_l] + \sum_{j=1}^N b_j X_j
    \]
    with $ \beta_{k, l}, b_j \in \mathbb{R}$ for all $j $ and all $ k < l $.

    $ T $ as above is in the Lie algebra generated by $ X_1,X_2, \dots, X_N$. yet, not all self-adjoint elements of this algebra are freely infinitely divisible. A first example was found by Dimitri Shlyakhtenko using AI (GPT5.4), more precisely for free standard semicircular variables $X$ and $ Y $, the self-adjoint element $X + Y - \frac{1}{6} [X,[X, Y]]$ is not freely infinitely divisible, as its 3-rd Hankel determinant is negative. Similar examples were found using FriCAS computer algebra system (code available upon request), such as $ t X + [X, [X, Y]]$ (the 5th Hankel determinant is negative for $t>13$) and $ tY + [X,[X, Y]]$ (the 7th Hankel determinant is negative for $ t< -9$).
    
\end{Rem}

\noindent\textbf{Thanks:} We thank Dimitri Shlyakhtenko for the example in Remark~\ref{computer} and for the inspiring  discussions during the writting of this paper.

\bibliographystyle{alpha}

\end{document}